\newtheorem{theorem}{Theorem}[section]
\newtheorem{lemma}[theorem]{Lemma}
\theoremstyle{definition}
\newtheorem{definition}[theorem]{Definition}
\newtheorem{remark}{Remark}
\newcommand{\e}{\varepsilon}
\newcommand{\Z}{\mathbb{Z}}
\newcommand{\R}{\mathbb{R}}
\newcommand{\dive}{\operatorname{div}}
\newcommand{\di}[1]{\,\mathrm{d}#1}
\newtcolorbox{mybox}[1]{%
    tikznode boxed title,
    enhanced,
    arc=0mm,
    interior style={white},
    attach boxed title to top left= {yshift=-\tcboxedtitleheight/2-0.05cm, xshift=0.7cm},
    fonttitle=\small\bfseries,
    colbacktitle=white,coltitle=black,
    boxed title style={size=small,colframe=white,boxrule=0pt},
    title={#1}}
\title[Homogenization of a Model for Fibre-Reinforced Hydrogels] 
      {Homogenization of a Poroelasticity Model for Fibre-Reinforced Hydrogels}
\author{Michael Eden}
\address{University Bremen, Germany}
\email{eden.michael@uni-bremen.de}
\author{Hari Shankar Mahato}
\address{IIT Kharagpur, India}
 \email{hsmahato@maths.iitkgp.ac.in}
\subjclass{Primary: 35B27; Secondary: 74Q15, 35Q92.}
 \keywords{system of elliptic and parabolic equations, well-posedness, two-scale models, homogenization, poroelasticity, tissue engineering.}
\begin{document}

\maketitle
%
%
%
%

\begin{abstract}
In this paper, the analysis and homogenization of a poroelastic model for the hydro-mechanical response of fibre-reinforced hydrogels is considered.
Here, the medium in question is considered to be a highly heterogeneous two-component media composed of a connected fibre-scaffold with periodically distributed inclusions of hydrogel.
While the fibres are assumed to be elastic, the hydromechanical response of hydrogel is modeled via \emph{Biot's poroelasticity}.

We show that the resulting mathematical problem admits a unique weak solution and investigate the limit behavior (in the sense of two-scale convergence) of the solutions with respect to a scale parameter, $\e$, characterizing the heterogeneity of the medium.
While doing $\e\to 0$, we arrive at an effective model where the micro variations of the pore pressure give rise to a micro stress correction at the macro scale.
\end{abstract}

\section{Introduction}
Fibre-reinforced hydrogels (FIHs) are composites of a synthetic hydrogel\footnote{A hydrogel is a network of hydrophilic polymer chains; think \emph{edible jelly} for an every day life example.} reinforced by a scaffold of microfibres, see \Cref{fig:1}.
They are used in tissue engineering (e.g., cartilage, tendon and ligament tissue, and vascular tissue \cite{PW17}) where the FIH is used as a surrogate framework for \emph{in vitro} growth.
We refer to \cite{CH18,MHL10,PW17} for the biochemical details regarding this process, but in short: host stem cells are seeded in the FIH where they are able to grow in an lab environment into fully functional tissue which can then be transplanted back into the host.
With the highly hydrated polymer network of hydrogels, FIHs mimic the environment of \emph{natural extracellular matrices} (ECMs), while the fibre scaffold improves the mechanical properties, see \cite{CH18} and references therein.
Without this reinforcement, it is very difficult to get mechanical strength and structural resilience comparable to its native biological counterpart \cite{C09,PW17}.

\begin{figure}[h]
\centering
\includegraphics[width=.6\textwidth]{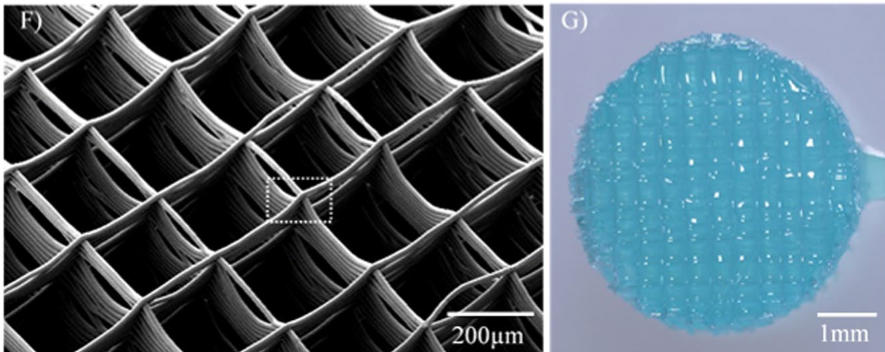}%
\caption{The periodic fibre scaffold empty (left) and saturated with hydrogel (right). This figure is taken from \cite{CH18} under a Creative Commons license (see: \url{http://creativecommons.org/license/by/4.0/})}%
\label{fig:1}%
\end{figure}

This kind of in vitro tissue engineering is a relatively new approach and has some important advantages when compared to alternative treatments: it does not involve donor cells, which removes the danger of adverse immune response, and it has the prospect of enabling therapies that are "cost-effective, time-efficient, and single procedure"(\cite{PW17}).

In practice, the filament spacing of the scaffold is usually in the range of $\mu m$ while the overall size of an FIH is in the range of $mm$ to $cm$ (\Cref{fig:1}).
Due to this scale heterogeneity, the effective hydromechanical properties of FIHs are not yet fully understood and, as a consequence, there is an interest in describing, modeling, and calculating their effective properties based on the underlying microstructure, cf.~\cite{CH18,CK18}.

In this work, we present a rigorously derived effective model for the hydromechanical properties of an FIHs based on a microstructure model describing the interplay between hydrogel and fibre structure.
This micromodel assumes the fibre scaffold to be elastic and the hydrogel to be linearly poroelastic (\emph{Biot's poroelasticity}).
After showing that this micromodel has a unique solution (well-posedness), a limit process in the context of mathematical homogenization is employed to arrive at the effective model.
In particular, this method also gives us effective material parameters like the elastic modulus.
To our knowledge, this is first mathematical work rigorously treating this particular problem.

In \cite{CH18}, a two-scale finite element computational model is proposed, where the hydrogel is assumed to behave like a \emph{Neo Hookean solid} (hyperelastic).
More closely related to our work, in \cite{CK18}, an effective model is derived from a two-phase elastic-poroelastic microproblem via formal asymptotic expansion.

It is worth noting that similar models to the micromodel considered in this work with some of the same features emerge in different applications as well.
In \cite{SM02}, a general mathematical analysis of \emph{Biot's poroelascticity} model is presented.
In ensuing work, mathematical homogenization scenarios in the context of double poroelasticity were explored, see, e.g., \cite{A11,A13,EB14}.
For further examples in the context of the related thermoelasticity, we refer to \cite{EKK02,EM17,ETT15}.

Regarding the structure of the article, in \Cref{sec:2}, we introduce the setting and the model as well as present the main results.
The \Cref{sec:3,sec:4} are dedicated to the study of the microproblem and the proof of the homogenization result, respectively.
\section{Setting of the problem and main results}\label{sec:2}
In this section, we provide the detailed geometric setup as well as the mathematical model that we are considering. 
In addition, we present our main results, namely \Cref{thm_existence,thm_homogenization}, which are proved in the ensuing sections.

In the following, let $\Omega\subset\R^n$ be a bounded Lipschitz domain representing the overall system and let $S=(0,T)$, $T>0$, represent the time interval of interest.
In addition, we denote the outer normal vector of $\Omega$ with $\nu=\nu(x)$.
Let $Y=(0,1)^3$ be the open unit cell in $\R^3$.
Take $Y^{f},$ $Y^{g}\subset Y$  two disjoint open sets, such that $Y^{f}$ is connected, such that $\Gamma:=\overline{Y^{f}}\cap\overline{Y^{g}}=\partial Y^{g}$, $\overline{Y^{g}}\subset Y$, and $Y=Y^{f}\cup Y^{g}\cup \Gamma$, see \Cref{f:1}.
With $n_\Gamma=n_\Gamma(y)$, $y\in\Gamma$, we denote the normal vector of $\Gamma$ pointing outwards of $Y^{g}$.

For $\e>0$, we introduce the $\e Y$-periodic initial domains $\Omega_\e^{f}$, $\Omega_\e^{g}$ and the interface $\Gamma_\e$ representing the fibre domain, gel domain and the boundary between fibre and gel, respectively. By $\partial \Omega^f_\e$ we denote the outer boundary of $\Omega$. Via ($i=f,g$)
\begin{align*}
	\Omega_\e^{i}=\Omega\cap\left(\bigcup_{k\in\Z^3}\e(Y^{i}+k)\right),\qquad
	\Gamma_\e=\Omega\cap\left(\bigcup_{k\in\Z^3}\e(\Gamma+k)\right).
\end{align*}

In the following, $\chi_\e^{i}\colon\Omega\to\{0,1\}$ ($i=f,g$) denotes the characteristic functions corresponding to $\Omega_\e^i$.

\begin{remark}
Please note that by this design $\Omega_\e^{f}$ is connected and $\Omega_\e^g$ is disconnected, which does not perfectly match the scaffold depicted in \Cref{fig:1} where holes between cells are clearly visible.
As in the similar work done in \cite{CK18}, however, we assume the fibre scaffold to be closed. In particular, two separate neighboring gel cells can only interact via the fibre scaffold.
\end{remark}
\begin{remark}
Furthermore, in reality, the fibre scaffold is very thin in relation to the size of an individual cell, see \Cref{fig:1}.
As a consequence, it might make sense to introduce an additional scale parameter measuring this thickness.
\end{remark}

\begin{figure}[!ht]\centering
	\begin{tikzpicture}[scale=0.6]
		\begin{scope}[rotate=0]
			\draw[fill=black!15,very thick] (5,0) rectangle (9,4);
			\draw[fill=black!15] (5,0) rectangle (9,4);
				\draw (8.85,2) -- (9.15,2);
				\draw (8.85,3) -- (9.15,3);
				\draw(9,2.5) node[right] {$\e$};
				\foreach \y in {1,2,3,4}
				\foreach \x in {1,2,3,4}
					\draw[fill=black!50] (4.5+\x,-0.5+\y) circle (0.44cm);
			\draw(6,3.8) -- (5.9,4.4);
					\draw (5.7,4.4) node[above] {$\Omega_f^\e$};	
			\draw (6.8,3.8) -- (6.8,4.4);
					\draw (7,4.4) node[above] {$\Gamma^\e$};
			\draw (8.5,3.5) -- (8.2,4.4);	
					\draw (8,4.4) node[above] {$\Omega_g^\e$};
			\draw[fill=black!15,very thick] (0,0) rectangle (4,4);
			\draw[fill=black!15] (0,0) rectangle (4,4);
			\draw[fill=black!50] (2,2) circle (1.7cm);
			\draw (1.5,3.2) -- (1.2,4.4);	
			\draw (1,4.4) node[above] {$Y_g$};
			\draw (3,3.7) -- (3.3,4.4);	
			\draw (3.15,4.4) node[above] {$Y_f$};
			\end{scope}
			\end{tikzpicture}
			\caption{The periodicity cell $Y$ and the geometrical setup of the periodic domain}
			\label{f:1}
	\end{figure}
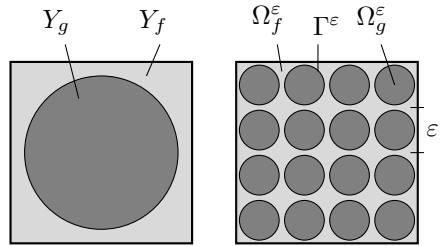

Now, let $w_\e\colon\Omega_\e^f\to\R^3$ represent the deformation in the fibre part.
Assuming that the mechanical response of the fibre scaffold is governed by quasi-stationary elasticity, we then have
\begin{subequations}
\begin{align}
-\nabla\cdot(\mathcal{C}e(w_\e))=f\quad\text{in}\quad S\times\Omega_\e^f.\label{eq:2.1a}
\end{align}

Here, $e(w_\e)=1/2(\nabla w_\e+\nabla w_\e^T)$ denotes the linearized strain tensor, $\mathcal{C}$ the elasticity tensor, and $f$ possible volume forces.
The hydrogel is itself a composite (polymer chains saturated with water) with complex mechanical properties.
In this work, as in many others, e.g., \cite{CYB18,CK18,KMh12,LN17}, we model it as a linear poroelastic material.
To that end, let $u_\e\colon\Omega_\e^g\to\R^3$ denote the solid deformation in the gel part and let $p_\e\colon\Omega_\e^g\to\R$ denote the pore pressure.
The model of Biot's linear poroelasticity is given by
\begin{alignat}{2}
-\nabla\cdot\left(\mathcal{D}e(u_\e)-\alpha p_\e\mathds{I}\right)&=g&\quad &\text{in}\ \ S\times\Omega_\e^g,\label{eq:2.1b}\\
\partial_t\left(cp_\e+\alpha\dive u_\e\right)-\nabla\cdot\left(\e^2 K\nabla p_\e\right)&=h&\quad &\text{in}\ \ S\times\Omega_\e^g.\label{eq:2.1c}
\end{alignat}

Here, $\mathcal{D}$ is the elasticity tensor, $c$ is the \emph{Biot modulus}, $\alpha$ is the \emph{Biot-Willis} parameter, and $\e^2K$ is the permeability.
$\mathds{I}$ denotes the unit matrix, and, again, $g$ and $h$ are possible volume sources.
Please note that this particular $\e^2$-scaling for the permeability in the gel part is the typical choice for these kinds of two-scale problems, cf.~\cite{CS99,EB14,FAZM11,Y09}.

Regarding the interaction of the two different phases, we assume both deformations and forces to be continuous across the interface $\Gamma_\e$.
That is,
\begin{alignat}{2}
\left(\mathcal{D}e(u_\e)-\alpha p_\e\mathds{I}\right)n_\e&=(\mathcal{C}e(w_\e))n_\e&\quad &\text{on}\quad S\times\Gamma_\e,\\
u_\e&=w_\e&\quad &\text{on}\quad S\times\Gamma_\e,\label{seq:5}
\end{alignat}

In addition, we assume no gel flux between the different cells
\begin{alignat}{2}
-\e^2 K\nabla p_\e\cdot n_\e&=0&\quad &\text{on}\quad S\times\Gamma_\e.
\end{alignat}

Finally, we close the model with homogeneous outer boundary and inital conditions:
\begin{alignat}{2}
u_\e&=0&\quad &\text{on}\quad S\times\partial\Omega_\e^f,\\
p_\e(0)&=0&\quad &\text{in}\quad \Omega_\e^g\label{seq:8}.
\end{alignat}
\end{subequations}

We assume all scalar coefficients to be constant and positive and all matrix and tensor coefficients to be constant, symmetric, and positive definite.

\begin{remark}
It is straightforward to extend our results to different boundary and initial conditions as well as to certain \emph{well behaved} non-homogeneous coefficients.
\end{remark}


%

We note that, due to interface condition \eqref{seq:5}, we can expect $U_\e\in H_0^1(\Omega)$ where

$$
U_\e:=
\begin{cases} w_\e\quad& \text{in}\ \ S\times\Omega_\e^f\\
u_\e\quad& \text{in}\ \ S\times\Omega_\e^g.
\end{cases}
$$
In the following, we will denote the zero extension of any function $\psi$ defined on $\Omega_\e^f$ or $\Omega_\e^g$ to the whole of $\Omega$ by $\widehat{\psi}$; with that we have $U_\e=\widehat{w_\e}+\widehat{u_\e}$.

Setting $\mathcal{A}=\chi_{\Omega_\e^f}\mathcal{C}+\chi_{\Omega_\e^g}\mathcal{D}$, a corresponding weak form of the full model is then given by:

\begin{mybox}{Weak form}
Find $(U_\e,p_\e)\in L^2(S;H_0^1(\Omega)\times H^1(\Omega_\e^g))$ such that $\partial_t\left(cp_\e+\alpha\nabla\cdot U_\e\right)\in L^2(S;H^1(\Omega_\e^g)^*)$ and 

\begin{subequations}
\begin{equation}\label{weak.1}
\int_\Omega\mathcal{A}\,e(U_\e)\colon e(v)\di{x}
-\int_{\Omega_\e^g}\alpha p_\e\nabla\cdot v\di{x}
=\int_{\Omega_\e^f}f\cdot v\di{x}+\int_{\Omega_\e^g}g\cdot v\di{x},
\end{equation}
\begin{align}\label{weak.2}
\left\langle\partial_t\left(cp_\e+\alpha\nabla\cdot U_\e\right),\varphi\right\rangle_{H^1(\Omega_\e^g)^*}
+\int_{\Omega_\e^g}\e^2 K\nabla p_\e\cdot\nabla\varphi\di{x}
&=\int_{\Omega_\e^g}h\varphi\di{x}
\end{align}
for all $(v,\phi)\in H_0^1(\Omega)\times H^1(\Omega_\e^g)$ and for almost all $t\in S$.
\end{subequations}
\end{mybox}

Here, and in the following, we assume that $f\in L^2(\Omega_\e^f)^3$, $g\in L^2(\Omega_\e^g)^3$, and $h\in L^2(\Omega_\e^g)$.
Also, for any Banach space $V$, $V^*$ denotes its topological dual and the bracket $\langle \cdot,\cdot\rangle$ indicates the corresponding dual pairing.

\subsection{Main results} In the following, we present our main results.
Namely, the existence result and $\e$-controlled estimates for the $\e$-problem, see \Cref{thm_existence}, and the final homogenization result, see \Cref{thm_homogenization}. 
The detailed proofs of these results can be found in Chapters 3 and 4, respectively.

\begin{theorem}\label{thm_existence}
There is a unique solution $(U_\e,p_\e)\in L^2(S;H_0^1(\Omega)\times H^1(\Omega_\e^g))$ with $\partial_t\left(cp_\e+\alpha\nabla\cdot U_\e\right)\in L^2(S;H^1(\Omega_\e^g)^*)$ satisfying equations \cref{weak.1,weak.2} for all test functions $(v,\phi)\in H_0^1(\Omega)\times H^1(\Omega_\e^g)$ and for almost all $t\in S$.
In addition,
 
$$
\sup_{\e>0}\left(\|p_\e\|_{L^\infty(S;L^2(\Omega_\e^g))}^2
+\|U_\e(t)\|^2_{L^\infty(S;H^1_0(\Omega))}
+\e^2\|\nabla p_\e\|^2_{L^2(S\times\Omega_\e^g)}\right)<\infty.
$$ 
\end{theorem}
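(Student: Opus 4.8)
The plan is to eliminate the displacement and reduce the coupled system \cref{weak.1,weak.2} to a single, degenerate parabolic problem for the pore pressure $p_\e$. For fixed $q\in L^2(\Omega_\e^g)$ the bilinear form $a(U,v):=\int_\Omega\mathcal{A}\,e(U):e(v)\di x$ is, by the constant positive definiteness of $\mathcal{A}$ together with Korn's inequality on $H_0^1(\Omega)$, bounded and coercive; both constants depend only on $\mathcal{A}$ and $\Omega$ and are in particular independent of $\e$. Lax--Milgram then yields a bounded linear solution operator $L\colon L^2(\Omega_\e^g)\to H_0^1(\Omega)$ characterised by $a(Lq,v)=\int_{\Omega_\e^g}\alpha q\,\nabla\cdot v\di x$ for all $v\in H_0^1(\Omega)$, together with the unique $U^0\in H_0^1(\Omega)$ solving $a(U^0,v)=\int_{\Omega_\e^f}f\cdot v\di x+\int_{\Omega_\e^g}g\cdot v\di x$. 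By linearity every solution of \eqref{weak.1} satisfies $U_\e(t)=U^0+Lp_\e(t)$, and inserting this into \eqref{weak.2} gives the evolution equation $\partial_t(Bp_\e)+A_\e p_\e=h$, where $B:=c\,\mathrm{id}+\alpha\,\nabla\cdot L$ on $L^2(\Omega_\e^g)$ and $A_\e$ is the monotone operator induced by $\int_{\Omega_\e^g}\e^2K\nabla p_\e\cdot\nabla\varphi\di x$. Testing the defining relation for $L$ against $v=Lq$ shows $\alpha\int_{\Omega_\e^g}q\,\nabla\cdot(Lq)\di x=a(Lq,Lq)\ge 0$; hence $B$ is bounded, symmetric (by symmetry of $\mathcal{A}$), and coercive, with $\langle Bq,q\rangle=c\|q\|_{L^2(\Omega_\e^g)}^2+a(Lq,Lq)\ge c\|q\|_{L^2(\Omega_\e^g)}^2$. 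This positivity is exactly the algebraic cancellation of the Biot coupling terms.

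Next I would solve the reduced problem by a Galerkin scheme in a basis $(\phi_k)$ of $H^1(\Omega_\e^g)$. At truncation level $N$ this is a linear system of ordinary differential equations $\frac{\mathrm d}{\mathrm d t}(Md)+\e^2Sd=H$ whose mass matrix $M_{jk}=\langle B\phi_k,\phi_j\rangle$ is symmetric positive definite, hence invertible, by coercivity of $B$; standard theory then gives a global-in-time solution. The uniform bounds derived below, together with weak and weak-$*$ compactness in $L^2(S;H^1(\Omega_\e^g))$ and $L^\infty(S;L^2(\Omega_\e^g))$, allow the passage $N\to\infty$; since the problem is linear no strong compactness is needed. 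The limit satisfies $\partial_t(Bp_\e)=\partial_t(cp_\e+\alpha\nabla\cdot U_\e)=h-A_\e p_\e\in L^2(S;H^1(\Omega_\e^g)^*)$, and setting $U_\e:=U^0+Lp_\e\in L^2(S;H_0^1(\Omega))$ recovers a weak solution of \cref{weak.1,weak.2}. Uniqueness is immediate from linearity: the difference of two solutions solves the homogeneous problem with vanishing initial datum, and the energy identity below forces it to be zero.

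For the a priori estimate I would argue on the Galerkin level, where all quantities are smooth in time, and test the approximate pressure equation with $p_\e$ itself. Using the symmetry and time-independence of $B$, the leading term becomes $\tfrac12\frac{\mathrm d}{\mathrm d t}\langle Bp_\e,p_\e\rangle$; integrating over $(0,t)$ and using $p_\e(0)=0$ (so the boundary term vanishes) gives
\[
\tfrac12\langle Bp_\e(t),p_\e(t)\rangle+\e^2\int_0^t\!\!\int_{\Omega_\e^g}K\nabla p_\e\cdot\nabla p_\e\di x\di s=\int_0^t\!\!\int_{\Omega_\e^g}h\,p_\e\di x\di s.
\]
Estimating the right-hand side by Cauchy--Schwarz and Young's inequality and applying Gr\"onwall's lemma to $\langle Bp_\e(\cdot),p_\e(\cdot)\rangle$ bounds $\|p_\e\|_{L^\infty(S;L^2(\Omega_\e^g))}$ (since $\langle Bq,q\rangle\ge c\|q\|_{L^2(\Omega_\e^g)}^2$) and, upon reinsertion, $\e\|\nabla p_\e\|_{L^2(S\times\Omega_\e^g)}$. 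Because $\langle Bp_\e,p_\e\rangle\ge a(Lp_\e,Lp_\e)$ controls $\|Lp_\e\|_{H_0^1(\Omega)}^2$ through Korn's inequality, and $U_\e=U^0+Lp_\e$ with $\|U^0\|_{H_0^1(\Omega)}$ bounded by the data, the $L^\infty(S;H_0^1(\Omega))$ bound on $U_\e$ follows as well. Every constant involved (Korn's constant on $H_0^1(\Omega)$, the coercivity constants of $\mathcal{A}$ and $B$, and the data norms, which I assume bounded uniformly in $\e$) is $\e$-independent, so the supremum over $\e$ is finite; the only $\e$-dependence sits in the $\e^2$-weighted dissipation, precisely as recorded in the statement.

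The principal obstacle is the mixed elliptic--parabolic, degenerate nature of the coupling: the displacement carries no time derivative of its own, so the system is not a standard parabolic evolution. The reduction above resolves this by folding the elastic response into the effective mass operator $B$; the one genuinely substantive point is the coercivity of $B$ on $L^2(\Omega_\e^g)$, equivalently the cancellation of the Biot cross-terms, since this is what simultaneously makes the Galerkin ODE system solvable, the limit passage possible, and the energy identity closable. The remaining steps are routine.
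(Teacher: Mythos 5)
Your reduction is the same one the paper uses: eliminate the displacement through the elliptic solution operator (your $L$ is, up to sign conventions, the paper's $-E_\e^{-1}\nabla_\e$ restricted to $L^2(\Omega_\e^g)$), fold the elastic response into an effective mass operator $B=c\,\mathrm{id}+\alpha\nabla\cdot L$, and observe that the Biot cross-terms make $B$ bounded, self-adjoint and coercive on $L^2(\Omega_\e^g)$ --- this is exactly \Cref{b_operator}. Where you diverge is in the two execution steps. For existence, the paper does not run a Galerkin scheme but cites Showalter's abstract result for implicit degenerate evolution equations $\partial_t(\mathcal{B}_\e p_\e)+\mathcal{K}_\e p_\e=\mathcal{H}_\e$ (\Cref{3:lem_solution}); your Galerkin argument is a self-contained substitute and works because the coercivity of $B$ makes the mass matrix invertible, though you should still say a word about recovering $p_\e(0)=0$ from $(Bp_\e)(0)=0$ via injectivity of $B$, as the paper does. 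For the a priori bound, the paper tests the \emph{coupled} system with $(\partial_t U_\e,p_\e)$ and adds the equations so that the cross-terms cancel (\Cref{3:lem_estimates}); this requires $f,g\in C^1(\overline{S};L^2)$, an integration by parts in time, and --- as the authors concede in a footnote --- a non-rigorous use of $\partial_t U_\e$ as a test function that must be repaired with difference quotients. Your alternative of testing the \emph{reduced} equation with $p_\e$ and writing $\langle\partial_t(Bp_\e),p_\e\rangle=\tfrac12\tfrac{\di{}}{\di{t}}\langle Bp_\e,p_\e\rangle$ at the Galerkin level achieves the same cancellation through the symmetry of $B$, avoids the illegitimate test function entirely, and needs no time-regularity of $f,g$ beyond what the statement assumes; the price is that with time-dependent sources the right-hand side of your reduced equation would pick up the extra term $-\partial_t(\alpha\nabla\cdot U^0)$, which you have silently dropped (harmless here since $f,g$ are time-independent in the theorem, but worth flagging since the paper's \Cref{3:lem_solution} allows $\mathcal{F}_\e\in C^1(\overline{S};L^2(\Omega))$). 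Both routes deliver the same $\e$-uniform constants, since Korn's inequality and the coercivity of $\mathcal{A}$ are invoked on the fixed domain $\Omega$.
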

\begin{proof}
This theorem is proved in Chapter 3.
For the existence of a unique solution, see \Cref{3:lem_solution}. 
The estimates are provided in \Cref{3:lem_estimates}.
\end{proof}

\begin{theorem}[Homogenization result]\label{thm_homogenization}
There are functions $U\in L^2(S;H_0^1(\Omega))^3$ and $p\in L^2(S;L^2(\Omega ;H_{per}^{1}(Y)))$ such that $\widehat{U_\e}\to U$ in $L^2(S;H_0^1(\Omega))^3$ and $\widehat{p_\e}\rightharpoonup[p]_y$.
Moreover, the limit functions satisfy the following homogenized system:
\begin{subequations}
\begin{alignat}{2}
-\nabla\cdot\left(\mathcal{A}^he(U)+\mathcal{A}[e_y(\tilde{u})]_Y-\alpha[p]_{Y^g}\right)&=F&\quad&\text{in}\ \Omega,\label{ahres1}\\
U&=0&\quad&\text{on}\ \partial\Omega,\label{ahres2}\\
-\nabla_y\cdot\left(\mathcal{C}\, e_y(\tilde{u}_f)\right)&=0&\quad&\text{in}\ \ Y^f,\label{ahres3}\\
-\nabla_y\cdot\left(\mathcal{D}\, e_y(\tilde{u}_g)-\alpha\,p\mathds{I}_3\right)&=0&\quad&\text{in}\ \ Y^g,\label{ahres4}\\
-\left(\mathcal{D}\, e_y(\tilde{u}_g)-\alpha\,p\mathds{I}_3\right)n_\Gamma&=-\mathcal{C}\, e_y(\tilde{u}_f)n_\Gamma&\quad&\text{on}\ \ \Gamma,\label{ahres5}\\
\tilde{u}_g&=\tilde{u}_f&\quad&\text{on}\ \ \Gamma,\label{ahres6}\\
y&\mapsto \tilde{u}\quad &&Y\text{-periodic},\label{ahres7}\\
\partial_t\left(cp+\alpha^h:\nabla U+\dive_y\tilde{u}\right)-\nabla\cdot\left(\mathcal{K}\nabla_y p\right)
&=h&\quad&\text{in}\ \ S\times Y^f\label{ahres8}\\
-K_y\nabla p\cdot n_\Gamma&=0&\quad&\text{on}\ \ S\times\Gamma,\label{ahres9}\\
p(0)&=0&\quad&\text{on}\ \ \Gamma.\label{ahres0}
\end{alignat}
\end{subequations}

The additional micro deformations $\tilde{u}_f$ and $\tilde{u}_g$ ($\tilde{u}=\widehat{\tilde{u}_f}+\widehat{\tilde{u}_g}$) are a consequence of the micro variations of the pore pressure, see \cref{ahres3,ahres4,ahres5,ahres6,ahres7}.
Here, $[\psi]_A$ denotes integration of a function $\psi$ over a domain $A$.
\end{theorem}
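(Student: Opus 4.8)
The plan is to perform the limit $\e\to0$ in the framework of two-scale convergence (in the sense of Nguetseng and Allaire). The $\e$-uniform bounds of \Cref{thm_existence} are exactly what is needed to invoke the two-scale compactness theorems. From the bound on $\|U_\e\|_{L^\infty(S;H^1_0(\Omega))}$ I would extract, along a subsequence, $U_\e\rightharpoonup U$ in $L^2(S;H^1_0(\Omega))$ together with the strain corrector $e(U_\e)\twosc e(U)+e_y(U_1)$ for some $U_1\in L^2(S\times\Omega;H_{per}^1(Y))^3$. From $\|p_\e\|_{L^\infty(S;L^2(\Omega_\e^g))}$ together with $\e\|\nabla p_\e\|_{L^2}$, the scaling of the permeability localizes the gradient at the micro scale: $\widehat{p_\e}\twosc p$ and $\e\widehat{\nabla p_\e}\twosc\nabla_y p$, with a limit pressure $p$ that lives on the gel cell $Y^g$ and is $Y$-periodic; its weak $L^2$-limit is the cell average $[p]_y$ appearing in the statement. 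It is precisely this $\e^2$-scaling that makes the limit pressure genuinely $y$-dependent and produces a micro-scale rather than a macro-scale diffusion.

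Next I would pass to the limit in the elliptic balance \eqref{weak.1} using oscillating test functions $v(x)+\e v_1(x,x/\e)$ with $v\in H^1_0(\Omega)$ and $v_1\in C_c^\infty(\Omega;C^\infty_{per}(Y))^3$. The coupling term $\int_{\Omega_\e^g}\alpha p_\e\,\dive v$ is controlled by two-scale convergence of $\widehat{p_\e}$ tested against the smooth periodic field, and the stress term passes to the limit via the strain corrector, giving a two-scale variational identity in the pair $(U,U_1)$. Varying $v_1$ with $v=0$ isolates the cell problem; by linearity I would decompose $U_1=U_1^{\mathrm{el}}[e(U)]+\tilde u[p]$ into the standard elasticity corrector (linear in $e(U)$, which defines $\mathcal{A}^h$ through the usual cell problems) and the pressure-driven corrector $\tilde u$, the latter satisfying \cref{ahres3,ahres4,ahres5,ahres6,ahres7}. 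Varying $v$ with $v_1=0$ then produces the macroscopic balance \cref{ahres1,ahres2}, in which the averaged corrector stress $\mathcal{A}[e_y(\tilde u)]_Y$ and the averaged pore pressure $\alpha[p]_{Y^g}$ appear as the micro stress correction announced in the abstract.

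The delicate step is the parabolic storage equation \eqref{weak.2}. I would test in time against $\phi\in C^\infty_c(S)$, shift $\partial_t$ onto the temporal test function, and use oscillating fields $\varphi(x)+\e\varphi_1(x,x/\e)$; the diffusive term $\int_{\Omega_\e^g}\e^2K\nabla p_\e\cdot\nabla(\cdots)$ then converges to the micro diffusion $\int K\nabla_y p\cdot\nabla_y(\cdots)$ thanks to the matched $\e$-scaling, and undoing the integration by parts in time yields \cref{ahres8,ahres9,ahres0}. I expect the main obstacle to lie here: the elasticity problem is quasi-stationary while the pressure evolves parabolically, so the two are coupled in time through $\partial_t\dive U_\e$, and passing to the limit in $\partial_t(cp_\e+\alpha\dive U_\e)$ requires both a time-compactness (Aubin--Lions type) argument to identify the limit of the storage combination and careful bookkeeping so that the micro-divergence $\dive_y\tilde u$ enters the effective storage law correctly.

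Finally, I would show that the homogenized two-scale system \cref{ahres1,ahres2,ahres3,ahres4,ahres5,ahres6,ahres7,ahres8,ahres9,ahres0} is itself uniquely solvable, using the same energy structure that underlies \Cref{thm_existence}; this makes the limits independent of the chosen subsequence and upgrades subsequential convergence to convergence of the whole family. Weak $H^1_0$ convergence of $U_\e$ combined with the compact embedding into $L^2$ gives strong $L^2$ convergence, and matching the two-scale energy of $e(U_\e)$ with that of $e(U)+e_y(U_1)$ through a corrector argument yields the convergence of $\widehat{U_\e}$ asserted in the statement.
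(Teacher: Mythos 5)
Your proposal follows essentially the same route as the paper: two-scale compactness from the uniform estimates of \Cref{thm_existence}, oscillating test functions $v_0+\e v_1(x,x/\e)$ in the momentum balance and $\phi(t,x,x/\e)$ in the storage equation with $\partial_t$ shifted onto the test function, and the decomposition of the corrector $U^1$ into the standard elasticity part (defining $\mathcal{A}^h$ via the cell problems) plus the pressure-driven part $\tilde u$. The only notable divergence is that the Aubin--Lions compactness you anticipate for the storage term is not actually needed in this linear setting---once $\partial_t$ sits on the test function, the weak two-scale limits of $p_\e$ and $\nabla\cdot U_\e$ identify the limit directly---while your closing steps (unique solvability of the limit system to upgrade subsequential to whole-sequence convergence, and the corrector energy argument for the strong convergence of $\widehat{U_\e}$) supply more detail than the paper's own proof records.
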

\begin{proof}
This theorem is proved in Chapter 4. 
For the convergence results, see \Cref{lem:hom_limit}.
The homoegenized system is then deduced in starting with \cref{hom_1}.
For the definitions of the effective coefficients like $\mathcal{A}^h$, see \cref{sys:h1,sys:h3,sys:h4,sys:h5}
\end{proof}

This homogenized model exhibits several interesting features.
First and foremost, we have the additional micro deformations and stresses given via $\tilde{u}$ that arises via the micro variations of the pore pressure $p$ governed by \cref{ahres3,ahres4,ahres5,ahres6,ahres7}.

First, the micro pore pressure problem given by equations \cref{ahres8,ahres9,ahres0} is almost the standard micro system for the chosen geometrical setup and $\e$-scaling (see, e.g., \cite{A11,EB14} for similar homogenization results) where the isotropy of the pressure deformation coupling is lost ($\alpha\dive U_\e$ vs.~$\alpha^h:\nabla U)$.
The main difference is the additional micro dissipation term given by $\alpha\dive_y\tilde{u}$: due to the micro variations of the pore pressure and the resulting micro deformations and stresses, a purely macroscopic term like $\alpha^h:\nabla U$ is not sufficient to capture the mechanical dissipation.

For the same reasons, the macroscopical momentum problem (\cref{ahres1,ahres2}) includes an additional averaged micro stress contribution (namely $\mathcal{A}[e_y(\tilde{u})]_Y$) accounting for the stresses due to the deformations at the micro scale.
Those are governed by equations \cref{ahres3,ahres4,ahres5} and are solely a consequence of the micro variations of the pore pressures.

\section{Analysis of the $\e$-Problem}\label{sec:3}
In this section, we tend to the analysis of the weak form given by \cref{weak.1,weak.2}.
To this end, we will introduce an equivalent abstract linear operator formulation of the problem, see \cref{eq:operator_form}, and establish some important properties of the involved operators.

We start with the momentum balance equation and note that, for every $\psi\in H^{-1}(\Omega)^3$, there is a unique $U_\e\in H^1_0(\Omega)^3$ such that

\begin{align*}
\int_\Omega\mathcal{A}\, e(U_\e)\colon e(v)\di{x}
=\langle\psi,v\rangle_{H^{-1}(\Omega)^3}
\end{align*}
for all $v\in H_0^1(\Omega)^3$.
Since $\mathcal{A}$ is positive definite, this follows by the Lemma of Lax-Milgram by using Korns inequality.
Also, the induced operator $E_\e\colon H_0^1(\Omega)^3\to H^{-1}(\Omega)$ is a homeomorphism.
We set $H_\e= L^2(\Omega)^3\times L^2(\Gamma_\e)^3$ with the natural embedding $H_0^1(\Omega)^3\hookrightarrow H_\e$ given by $v\mapsto[v,v_{|\Gamma_\e}]$.
We introduce the $\e$-gradient operator

$$
\nabla_\e\colon H^1(\Omega_\e^g)\to H_\e \ \ \text{via}\ \ (\nabla_\e p,[f,g])_{H_\e}=\int_{\Omega_\e^g}\alpha \nabla p_\e f\di{x}-\int_{\Gamma_\e}\alpha p_\e n_\e\cdot g,
$$
as well as the $\e$-divergence operator $\nabla_\e\cdot$,\footnote{Here, we have identified the Hilbert space $H_\e$ with its dual. Moreover, $^*$ denotes the dual operator.}

$$
\nabla_\e\cdot:=-(\nabla_\e)^*\colon H_\e\to H^1(\Omega_\e^g)^*\ \ \text{via}\ \ 
\langle\nabla_\e\cdot[f,g], p\rangle_{H^1(\Omega_\e^g)}=-\left(\nabla_\e p,[f,g]\right)_{H_\e}.
$$
For smoother functions $v\in H^1_0(\Omega)^3$, we have via the divergence theorem

$$
\langle\nabla_\e\cdot v, p_\e\rangle_{H^1(\Omega_\e^g)}=-\int_{\Omega_\e^g}\alpha \nabla p_\e v\di{x}+\int_{\Gamma_\e}\alpha p_\e n_\e\cdot v=\int_{\Omega_\e^g}\alpha p_\e\nabla\cdot v
$$
which implies $\nabla_\e\cdot_{|_{H^1_0(\Omega)^3}}\colon H^1_0(\Omega)^3\to L^2(\Omega_\e^g)$.
Conversely, noting that

$$
\nabla_\e=-\left(\nabla_\e\cdot_{|_{H^1_0(\Omega)^3}}\right)^*_{|_{H^1(\Omega_\e^g)}}
$$
we can extend the $\e$-gradient operator to $L^2(\Omega_e^g)$-functions by setting

$$
\nabla_\e =-\left(\nabla_\e\cdot_{|_{H^1_0(\Omega)^3}}\right)^*.
$$
By abuse of notation, we will from now on only use $\nabla_\e$ and $\nabla_\e\cdot$, see also \Cref{f:diagram}.

{\renewcommand{\arraystretch}{1.2} 
	\begin{figure}[H]
	\centering 
		\large 
		\begin{tabular}{cccccc}
			$H_\e$	& $\stackrel{\nabla\cdot=-(\nabla^\e)^*}{\longrightarrow}$ 									&$ H^1(\Omega_g^\e)^*$&&\\ 
			$\bigcup$														&		&$\bigcup$																						&&\\
			$H_0^1(\Omega)^3$											& $\stackrel{\nabla\cdot}{\longrightarrow}$    &$L^2(\Omega_g^\e)$			&$\stackrel{\nabla^\e=-(\nabla^\e\cdot)^*}{\longrightarrow}$ &$H^{-1}(\Omega)^3$\\
			&&$\bigcup$				&	&$\bigcup$					\\
			&	&$ H^1(\Omega_g^\e)$					& $\stackrel{\nabla^\e}{\longrightarrow}$ 	&$H_\e$
		\end{tabular}
		\caption{Diagram of the interaction of the coupling operators, inspired by \cite{SM02}.}
		\label{f:diagram}
		\end{figure}}

We see that \cref{weak.1} is equivalent to

$$
E_\e U_\e+\nabla_\e p_\e=\mathcal{F}_\e\quad\text{in}\ H^{-1}(\Omega),
$$
where 

$$
\mathcal{F}_\e=\begin{cases}f\quad&\text{in}\ \Omega_\e^f\\ g\quad&\text{in}\ \Omega_\e^g\end{cases}.
$$
For any given $p_\e\in H^1(\Omega_g^\e)$, this leads to the solution

\begin{equation}\label{eq:u}
U_\e=-E_\e^{-1}\nabla_\e p_\e+E_\e^{-1}\mathcal{F}_\e\quad \text{in}\ H_0^1(\Omega).
\end{equation}
We go on introducing the linear operator

$$
\mathcal{K}_\e\colon H^1(\Omega_\e^{g})\to H^1(\Omega_\e^{g})^*
	\ \ \text{via}\ \  \langle\mathcal{K}_\e p,\varphi\rangle_{H^1(\Omega_\e^{g})^*}=\e^2(K\nabla p,\nabla\varphi)_{L^2(\Omega_\e^{g})}.
$$
For \cref{weak.2}, this leads to

\begin{equation}\label{eq:operator_form}
\partial_t\left(cp_\e-\nabla_\e\cdot E_\e^{-1}\nabla_\e p_\e\right)
+\mathcal{K}_\e p_\e
=\mathcal{H}_\e\quad\text{in}\ H^1(\Omega_\e^g)^*,
\end{equation}
where $\mathcal{H}_\e$ is given by

$$
\mathcal{H}_\e=h+\partial_t\nabla_\e\cdot E_\e^{-1}\mathcal{F}_\e.
$$

\begin{remark}
Please note that the operator $\nabla_\e\cdot E_\e^{-1}\nabla_\e$, although involving differential operators, is not itself a differential operator.
Formally, both $\nabla_\e$ and $\nabla_\e\cdot$ are differential operators of order 1 and $E_\e^{-1}$, being the inverse of an elliptic operator, will lift the function for two derivatives.
As a consequence, $\nabla_\e\cdot E_\e^{-1}\nabla_\e$ maps $L^2(\Omega_\e^g)$ into $L^2(\Omega_\e^g)$.
Also, for $\mathcal{F}_\e\in C^1(\overline{S};L^2(\Omega_\e^g))$, $\partial_t\nabla_\e\cdot E_\e^{-1}\mathcal{F}_\e$ is well defined as $\nabla_\e\cdot E_\e^{-1}$ is linear, bounded, and time independent.
\end{remark}

We set

$$
\mathcal{B}_\e:=c\mathrm{Id}-\nabla_\e\cdot E_\e^{-1}\nabla_\e\colon L^2(\Omega_\e^g)\to L^2(\Omega_\e^g)
$$
where $\mathrm{Id}$ denotes the idendity operator.

\begin{lemma}\label{b_operator}
The operator $\mathcal{B_\e}$ is linear, continuous, positive, and self-adjoint.
\end{lemma}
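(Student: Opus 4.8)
The plan is to derive all four properties from a single bilinear identity and then read off each claim from the structure of its right-hand side. Concretely, for $p,q\in L^2(\Omega_\e^g)$ I would set $U_p:=E_\e^{-1}\nabla_\e p$ and $U_q:=E_\e^{-1}\nabla_\e q$ in $H_0^1(\Omega)^3$ and aim to establish
\[
\left(\nabla_\e\cdot E_\e^{-1}\nabla_\e p,\,q\right)_{L^2(\Omega_\e^g)}=-\int_\Omega\mathcal{A}\,e(U_p):e(U_q)\di{x}.
\]
Everything else is a consequence of this formula together with the symmetry and positive definiteness of $\mathcal{A}$.

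\emph{Linearity and continuity} are routine and I would dispatch them first. Each constituent is linear and bounded: the extended $\nabla_\e$ is bounded because it is the negative adjoint $-(\nabla_\e\cdot|_{H_0^1(\Omega)^3})^*$ of a bounded operator; $E_\e^{-1}$ is bounded since $E_\e$ is a homeomorphism (Lax--Milgram plus Korn, as established above); and $\nabla_\e\cdot|_{H_0^1(\Omega)^3}$ maps boundedly into $L^2(\Omega_\e^g)$. Hence the composition is a bounded linear map, and adding $c\,\mathrm{Id}$ keeps $\mathcal{B}_\e$ a bounded linear operator on $L^2(\Omega_\e^g)$; the remark preceding the lemma already secures that the range lies in $L^2(\Omega_\e^g)$ rather than only in $H^1(\Omega_\e^g)^*$.

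For the identity, I would use $E_\e U_p=\nabla_\e p$ together with $\nabla_\e\cdot U_p\in L^2(\Omega_\e^g)$ and the defining adjoint relation $\nabla_\e=-(\nabla_\e\cdot|_{H_0^1(\Omega)^3})^*$, computing
\[
\left(\nabla_\e\cdot E_\e^{-1}\nabla_\e p,\,q\right)_{L^2(\Omega_\e^g)}=\left(\nabla_\e\cdot U_p,\,q\right)_{L^2(\Omega_\e^g)}=-\langle\nabla_\e q,\,U_p\rangle=-\langle E_\e U_q,\,U_p\rangle=-\int_\Omega\mathcal{A}\,e(U_p):e(U_q)\di{x},
\]
the final equality being the definition of $E_\e$. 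From here the conclusions are immediate. \emph{Self-adjointness}: since $\mathcal{A}$ is symmetric, the right-hand side is invariant under $p\leftrightarrow q$, so $\nabla_\e\cdot E_\e^{-1}\nabla_\e$ is self-adjoint, and adding the self-adjoint operator $c\,\mathrm{Id}$ preserves this. \emph{Positivity}: taking $q=p$ and using the sign in $\mathcal{B}_\e=c\,\mathrm{Id}-\nabla_\e\cdot E_\e^{-1}\nabla_\e$ gives
\[
\left(\mathcal{B}_\e p,\,p\right)_{L^2(\Omega_\e^g)}=c\,\|p\|_{L^2(\Omega_\e^g)}^2+\int_\Omega\mathcal{A}\,e(U_p):e(U_p)\di{x}\ge c\,\|p\|_{L^2(\Omega_\e^g)}^2,
\]
by positive definiteness of $\mathcal{A}$ and $c>0$, so $\mathcal{B}_\e$ is in fact strictly positive (coercive).

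The only genuinely delicate point—and the step most likely to cause errors—is the duality bookkeeping: one must keep straight which version of $\nabla_\e$ and $\nabla_\e\cdot$ (the restriction to $H_0^1(\Omega)^3$ versus the extension to $L^2(\Omega_\e^g)$) and which pairing (the $L^2(\Omega_\e^g)$ inner product versus the $H^{-1}(\Omega)^3$--$H_0^1(\Omega)^3$ duality) is active at each transition, together with the minus sign carried by $\nabla_\e=-(\nabla_\e\cdot)^*$ and the self-adjointness of $E_\e^{-1}$ (itself inherited from the symmetry of the form $\int_\Omega\mathcal{A}\,e(\cdot):e(\cdot)\di{x}$). Once the central identity is assembled with the correct signs, symmetry and positivity drop out with no further work.
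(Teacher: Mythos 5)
Your proposal is correct and follows essentially the same route as the paper: both reduce positivity and self-adjointness of $\mathcal{B}_\e$ to the adjoint relation $\nabla_\e\cdot=-(\nabla_\e)^*$ combined with the positivity and symmetry of $E_\e^{-1}$ (which you make explicit by unwinding $\langle\nabla_\e q,E_\e^{-1}\nabla_\e p\rangle$ into the symmetric coercive form $\int_\Omega\mathcal{A}\,e(U_p):e(U_q)\di{x}$), while linearity and continuity follow from boundedness of the constituent operators. Your central identity is just a slightly more explicit rendering of the paper's one-line computation, so the two arguments coincide in substance.
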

\begin{proof}
Linearity and continuity are clear since $\mathcal{B}_\e$ is composed of linear and continuous operators.
For the positivity, we observe ($p\in L^2(\Omega_\e^g)$)

\begin{align*}
\left(\mathcal{B}_\e p,p\right)_{L^2(\Omega_\e^g)}&=c\|p\|^2_{L^2(\Omega_\e^g)}-\left(\nabla_\e\cdot E_\e^{-1}\nabla_\e p,p\right)_{L^2(\Omega_\e^g)}\\
&=c\|p\|^2_{L^2(\Omega_\e^g)}+\langle\nabla_\e p,E_\e^{-1}\nabla_\e p\rangle_{H^1_0(\Omega)}\\
&\geq c\|p\|^2_{L^2(\Omega_\e^g)}+c_\mathcal{A}^{-1}\|\nabla_\e p\|_{H^{-1}(\Omega)^3}.
\end{align*}
Here, we have used the duality between the $\e$-gradient and $\e$-divergence operators.
The positivity of $E_\e^{-1}$ thus implies positivity of $\mathcal{B}_\e$.
Similarly, the self-adjointness follows directly from $E_\e^{-1}$ having this property.
\end{proof}

\begin{lemma}[Existence of a unique solution]\label{3:lem_solution}
Let $\mathcal{F}_\e\in C^1(\overline{S};L^2(\Omega))$ and $h\in L^2(S\times\Omega)$.
There is a unique $(U_\e,p_\e)\in L^2(S;H_0^1(\Omega)^3\times H^1(\Omega_\e^g))$ satisfying $p_\e(0)=0$ and $\partial_tp_\e\in L^2(S\times\Omega)$ solving the operator problem
\begin{alignat}{2}
E_\e U_\e+\nabla_\e p_\e&=\mathcal{F}_\e&\quad&\text{in}\ H^{-1}(\Omega),\\
\partial_t(\mathcal{B}_\e p_\e)+\mathcal{K}_\e p_\e&=\mathcal{H}_\e&\quad&\text{in}\ H^1(\Omega_\e^g)^*.\label{eq:2}
\end{alignat}
Moreover, we have $\partial_tU_\e\in L^2(S;H_0^1(\Omega)^3)$.
\end{lemma}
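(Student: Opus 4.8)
The plan is to decouple the system and reduce everything to a single (nonlocal) scalar parabolic problem for $p_\e$. Since $E_\e$ is a homeomorphism, the momentum equation is equivalent to the explicit representation $U_\e=-E_\e^{-1}\nabla_\e p_\e+E_\e^{-1}\mathcal{F}_\e$ of \eqref{eq:u}; it therefore suffices to produce $p_\e$ with the claimed regularity and then \emph{define} $U_\e$ by this formula. Substituting this into the second equation turns it into the abstract evolution problem $\partial_t(\mathcal{B}_\e p_\e)+\mathcal{K}_\e p_\e=\mathcal{H}_\e$ in the Gelfand triple $H^1(\Omega_\e^g)\hookrightarrow L^2(\Omega_\e^g)\hookrightarrow H^1(\Omega_\e^g)^*$, with the bounded self-adjoint ``mass'' operator $\mathcal{B}_\e$ standing in front of the time derivative.

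Two structural facts drive the argument. First, by \Cref{b_operator} (and the estimate in its proof) the operator $\mathcal{B}_\e$ is not merely positive but coercive on $L^2(\Omega_\e^g)$, namely $(\mathcal{B}_\e p,p)_{L^2(\Omega_\e^g)}\geq c\|p\|_{L^2(\Omega_\e^g)}^2$, so $b(p,q):=(\mathcal{B}_\e p,q)_{L^2(\Omega_\e^g)}$ is an inner product equivalent to the standard one. Second, the symmetric form $a(p,\varphi):=\langle\mathcal{K}_\e p,\varphi\rangle=\e^2(K\nabla p,\nabla\varphi)_{L^2(\Omega_\e^g)}$ is \emph{not} coercive on $H^1(\Omega_\e^g)$ (the locally constant functions lie in its kernel), but it does satisfy a G{\aa}rding inequality $a(p,p)+\e^2 c_K\|p\|_{L^2(\Omega_\e^g)}^2\geq \e^2 c_K\|p\|_{H^1(\Omega_\e^g)}^2$, with $c_K$ the ellipticity constant of $K$, and the missing $L^2$-control is supplied precisely by the $\mathcal{B}_\e$-term. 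This is the point where the otherwise degenerate structure is rescued.

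For the construction I would use a Galerkin scheme: fix a basis $(\phi_k)_k$ of $H^1(\Omega_\e^g)$ with dense finite spans and seek $p^N(t)=\sum_{k=1}^N\alpha_k(t)\phi_k$ solving $\frac{d}{dt}(\mathcal{B}_\e p^N,\phi_j)_{L^2(\Omega_\e^g)}+a(p^N,\phi_j)=\langle\mathcal{H}_\e,\phi_j\rangle$ with $p^N(0)=0$. Because the mass matrix $(\mathcal{B}_\e\phi_k,\phi_j)_{L^2(\Omega_\e^g)}$ is symmetric positive definite by coercivity of $\mathcal{B}_\e$, this is a linear ODE system with nonsingular mass matrix and $L^2$-in-time forcing (note $\mathcal{H}_\e=h+\nabla_\e\cdot E_\e^{-1}\partial_t\mathcal{F}_\e\in L^2(S;L^2(\Omega_\e^g))$, using the boundedness and time-independence of $\nabla_\e\cdot E_\e^{-1}$), hence uniquely solvable. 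Testing with $p^N$ and combining $b$-coercivity, $a\geq 0$, and Gronwall gives a uniform bound in $L^\infty(S;L^2(\Omega_\e^g))\cap L^2(S;H^1(\Omega_\e^g))$; testing with $\partial_t p^N$ and exploiting the symmetry of $a$ (so that $a(p^N,\partial_t p^N)=\tfrac12\frac{d}{dt}a(p^N,p^N)$) together with $p^N(0)=0$ yields, via $(\mathcal{B}_\e\partial_t p^N,\partial_t p^N)\geq c\|\partial_t p^N\|_{L^2(\Omega_\e^g)}^2$ and absorption of the right-hand side, a uniform bound on $\partial_t p^N$ in $L^2(S;L^2(\Omega_\e^g))$. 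Extracting weak(-$*$) limits and passing to the limit in the linear Galerkin identities produces $p_\e$ with $\partial_t p_\e\in L^2(S\times\Omega)$ and $p_\e\in H^1(S;L^2(\Omega_\e^g))\hookrightarrow C(\overline S;L^2(\Omega_\e^g))$, so that $p_\e(0)=0$ is meaningful and inherited from $p^N(0)=0$.

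Uniqueness follows by linearity: for $\mathcal{H}_\e=0$ and $p_\e(0)=0$, the integration-by-parts identity $\langle\partial_t(\mathcal{B}_\e p_\e),p_\e\rangle=\tfrac12\frac{d}{dt}(\mathcal{B}_\e p_\e,p_\e)_{L^2(\Omega_\e^g)}$ (valid since $\mathcal{B}_\e$ is bounded self-adjoint and $p_\e\in H^1(S;L^2(\Omega_\e^g))$) gives $\frac{d}{dt}(\mathcal{B}_\e p_\e,p_\e)_{L^2(\Omega_\e^g)}=-2a(p_\e,p_\e)\leq 0$, whence $c\|p_\e(t)\|_{L^2(\Omega_\e^g)}^2\leq(\mathcal{B}_\e p_\e(0),p_\e(0))_{L^2(\Omega_\e^g)}=0$. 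Finally, $U_\e=-E_\e^{-1}\nabla_\e p_\e+E_\e^{-1}\mathcal{F}_\e$ lies in $L^2(S;H_0^1(\Omega)^3)$, and differentiating in time while using that $E_\e^{-1}\nabla_\e\colon L^2(\Omega_\e^g)\to H_0^1(\Omega)^3$ is bounded and time-independent yields $\partial_t U_\e=-E_\e^{-1}\nabla_\e\partial_t p_\e+E_\e^{-1}\partial_t\mathcal{F}_\e\in L^2(S;H_0^1(\Omega)^3)$. The main obstacle is the interplay of the implicit/degenerate time derivative (the operator $\mathcal{B}_\e$ in place of the identity) with the lack of coercivity of $\mathcal{K}_\e$; both are resolved by the $L^2$-coercivity of $\mathcal{B}_\e$ established in \Cref{b_operator} (one may alternatively invoke an abstract existence result for such implicit evolution equations, cf.~the framework of \cite{SM02}).
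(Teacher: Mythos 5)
Your argument is correct, and it reaches the same structural reduction as the paper (solve the implicit parabolic problem for $p_\e$, then recover $U_\e$ and $\partial_t U_\e$ from \cref{eq:u}), but it takes a genuinely different route for the existence step: the paper simply verifies the hypotheses of an abstract result for degenerate implicit evolution equations $\partial_t(\mathcal{B}u)+\mathcal{K}u=f$ (\cite[Chapter III.3, Proposition 3.2]{S96}, via \Cref{b_operator}) and then upgrades $\partial_t(\mathcal{B}_\e p_\e)$-regularity to $\partial_t p_\e$-regularity using the strict positivity of $\mathcal{B}_\e$, whereas you construct the solution by hand with a Galerkin scheme. Your version is longer but buys two things. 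First, it makes explicit a point the paper glosses over: $\mathcal{K}_\e$ is \emph{not} coercive on $H^1(\Omega_\e^g)$ (locally constant functions lie in its kernel), only a G{\aa}rding inequality holds, and it is precisely the $L^2$-coercivity $(\mathcal{B}_\e p,p)\geq c\|p\|^2_{L^2(\Omega_\e^g)}$ hidden in the proof of \Cref{b_operator} that closes the energy estimates — this is also exactly the hypothesis one must check to apply Showalter's theorem, so your observation is needed either way. Second, your test with $\partial_t p^N$ (using the symmetry of $\mathcal{K}_\e$ and $p^N(0)=0$) yields $\partial_t p_\e\in L^2(S\times\Omega)$ directly at the approximation level, which is arguably cleaner than the paper's a posteriori deduction from the equation. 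The remaining ingredients — injectivity of $\mathcal{B}_\e$ to pass from $(\mathcal{B}_\e p_\e)(0)=0$ to $p_\e(0)=0$ (you get the initial condition even more directly from $p_\e\in H^1(S;L^2)\hookrightarrow C(\overline S;L^2)$), the uniqueness energy identity, and the boundedness and time-independence of $E_\e^{-1}\nabla_\e$ for the regularity of $U_\e$ and $\partial_t U_\e$ — coincide with the paper's.
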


\begin{proof}
Since the operator $\mathcal{B}_\e$ is linear, continous, positive, and self-adjoint and $\mathcal{K}_\e$ is positive definite, there is a unique solution $p_\e\in L^2(S;H^1(\Omega_\e^g))$ of \cref{eq:2} such that $(\mathcal{B_\e}p_\e)(0)=0$ and $\partial_t(\mathcal{B}_\e p_\e)\in L^2(S;H^1(\Omega)^*)$, see \cite[Chapter III.3, Proposition 3.2]{S96}.
Since $\mathcal{B}_\e$ is injective, $\mathcal{B_\e}p_\e(0)=0$ implies $p_\e(0)=0$.
Moreover, we get the corresponding $U_\e\in L^2(S;H_0^1(\Omega)^3)$ via equation \cref{eq:u}.

At this point, it is not clear that time derivatives of $p_\e$ or $U_\e$ exist (we only have $\partial_t(\mathcal{B}_\e p_\e)\in L^2(S;H^1(\Omega)^*)$), but since $\mathcal{B}_\e$ is time-independent, linear, continuous, and strictly positive, we see that $\partial_t p_\e\in L^2(S;H^1(\Omega)^*)$.
As $\mathcal{H}_\e\in L^2(S\times\Omega)$, it follows that $\partial_t p_\e\in L^2(S\times\Omega)$ as well.
Now, we define $W_\e(t)\in H_0^1(\Omega)^3$ as

$$
W_\e(t)=-E_\e^{-1}\nabla_\e\partial_tp_\e+E_\e^{-1}\partial_t\mathcal{F}_\e
$$ 
which shows us that $\partial_tU_\e=W_\e\in L^2(S;H_0^1(\Omega)^3)$.
\end{proof}

With this result regarding the existence of a unique solution, we now turn to $\e$-controlled energy estimates.
Those are extremely important in the homogenization context as they will be used to facilitate the limit analysis $\e\to0$.
It is possible to arrive at slightly better estimate than given in the ensuing lemma, e.g., also boundedness of $\partial_tp_\e$ in $L^2(S\times\Omega)$, but we concentrate on the estimates needed for the homogenization.

\begin{lemma}[Estimates]\label{3:lem_estimates}
Every solution $(u_\e,p_\e)$ satisfies
\begin{equation}\label{eq:3.5}
\sup_{\e>0}\left(\|p_\e\|_{L^\infty(S;L^2(\Omega_\e^g))}^2
+\|U_\e(t)\|^2_{L^\infty(S;H^1_0(\Omega))}
+\e^2\|\nabla p_\e\|^2_{L^2(S\times\Omega_\e^g)}\right)<\infty.
\end{equation}
\end{lemma}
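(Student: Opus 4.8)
The plan is to derive a single energy identity by testing the momentum balance \cref{weak.1} with $v=\partial_t U_\e$ and the pressure equation \cref{weak.2} with $\varphi=p_\e$, and then adding the two. By \Cref{3:lem_solution} both $\partial_t U_\e\in L^2(S;H_0^1(\Omega)^3)$ and $\partial_t p_\e\in L^2(S\times\Omega)$ exist, so these test functions are admissible and every dual pairing collapses to an $L^2$-integral. The symmetry of $\mathcal{A}$ turns the elastic term into $\tfrac{1}{2}\tfrac{\mathrm{d}}{\mathrm{d}t}\int_\Omega\mathcal{A}e(U_\e)\colon e(U_\e)\di{x}$, while $c\langle\partial_tp_\e,p_\e\rangle=\tfrac{c}{2}\tfrac{\mathrm{d}}{\mathrm{d}t}\|p_\e\|_{L^2(\Omega_\e^g)}^2$. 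The decisive point is that the two pressure--deformation coupling terms, namely $-\alpha\int_{\Omega_\e^g}p_\e\,\nabla\cdot\partial_t U_\e\di{x}$ from \cref{weak.1} and $+\alpha\int_{\Omega_\e^g}(\nabla\cdot\partial_t U_\e)\,p_\e\di{x}$ from \cref{weak.2}, cancel exactly. Adding the two equations therefore yields the clean identity
\begin{equation*}
\tfrac{1}{2}\tfrac{\mathrm{d}}{\mathrm{d}t}\int_\Omega\mathcal{A}e(U_\e)\colon e(U_\e)\di{x}
+\tfrac{c}{2}\tfrac{\mathrm{d}}{\mathrm{d}t}\|p_\e\|_{L^2(\Omega_\e^g)}^2
+\e^2\int_{\Omega_\e^g}K\nabla p_\e\cdot\nabla p_\e\di{x}
=\langle\mathcal{F}_\e,\partial_t U_\e\rangle+\int_{\Omega_\e^g}h\,p_\e\di{x},
\end{equation*}
which I would then integrate over $(0,t)$.

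The main obstacle is the source term $\langle\mathcal{F}_\e,\partial_t U_\e\rangle$, which still carries a time derivative of $U_\e$. Here I would integrate by parts in time, using $\mathcal{F}_\e\in C^1(\overline{S};L^2(\Omega))$ from \Cref{3:lem_solution}:
\begin{equation*}
\int_0^t\langle\mathcal{F}_\e,\partial_t U_\e\rangle\di{s}
=\langle\mathcal{F}_\e(t),U_\e(t)\rangle
-\langle\mathcal{F}_\e(0),U_\e(0)\rangle
-\int_0^t\langle\partial_t\mathcal{F}_\e,U_\e\rangle\di{s}.
\end{equation*}
The boundary term at $t$ is estimated by Poincar\'e's inequality on $H_0^1(\Omega)$ followed by Young's inequality, producing a term $\tfrac{\delta}{2}\|U_\e(t)\|_{H_0^1(\Omega)}^2$ that can be absorbed into the left-hand side: the uniform coercivity of $\mathcal{A}=\chi_{\Omega_\e^f}\mathcal{C}+\chi_{\Omega_\e^g}\mathcal{D}$ together with Korn's inequality gives $\int_\Omega\mathcal{A}e(U_\e)\colon e(U_\e)\di{x}\geq c_0\|U_\e\|_{H_0^1(\Omega)}^2$ with $c_0>0$ independent of $\e$. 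The initial term is controlled by noting that $p_\e(0)=0$ forces $U_\e(0)=E_\e^{-1}\mathcal{F}_\e(0)$ via \cref{eq:u}, and $E_\e^{-1}$ is bounded uniformly in $\e$ (again by uniform coercivity of $\mathcal{A}$ and Lax--Milgram), so that $\|U_\e(0)\|_{H_0^1(\Omega)}\leq C\|\mathcal{F}_\e(0)\|_{L^2(\Omega)}$. The remaining source $\int_{\Omega_\e^g}h\,p_\e\di{x}$ and the integral $\int_0^t\langle\partial_t\mathcal{F}_\e,U_\e\rangle\di{s}$ are handled by Young's inequality, leaving remainders $\int_0^t\|p_\e\|_{L^2(\Omega_\e^g)}^2\di{s}$ and $\int_0^t\|U_\e\|_{H_0^1(\Omega)}^2\di{s}$ to be absorbed by Gr\"onwall's lemma.

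The feature that makes the uniformity in $\e$ work — and the reason the formulation is built around $U_\e\in H_0^1(\Omega)$ on the \emph{fixed} domain $\Omega$ — is that all Korn and Poincar\'e constants, as well as the coercivity constant $c_0$, depend only on $\Omega$ and on $\mathcal{C},\mathcal{D}$, and hence not on the $\e$-geometry. The permeability dissipation $\e^2\int_{\Omega_\e^g}K|\nabla p_\e|^2\di{x}$ sits with a favourable sign on the left, so after taking the supremum in $t$ and using that $\|\mathcal{F}_\e\|_{C^1(\overline{S};L^2(\Omega))}$ and $\|h\|_{L^2(S\times\Omega)}$ are bounded uniformly in $\e$, Gr\"onwall's inequality delivers a bound on $\|p_\e\|_{L^\infty(S;L^2(\Omega_\e^g))}^2+\|U_\e\|_{L^\infty(S;H_0^1(\Omega))}^2+\e^2\|\nabla p_\e\|_{L^2(S\times\Omega_\e^g)}^2$ with an $\e$-independent constant, which is exactly \cref{eq:3.5}. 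I expect the only genuinely delicate point to be justifying the choice $v=\partial_t U_\e$ and the in-time integration by parts at the level of the dual pairings; this is precisely what the regularity $\partial_t U_\e\in L^2(S;H_0^1(\Omega)^3)$ and $\partial_t p_\e\in L^2(S\times\Omega)$ from \Cref{3:lem_solution} is there to license.
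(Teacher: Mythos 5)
Your proposal is correct and follows essentially the same route as the paper's proof: test with $(\partial_t U_\e,p_\e)$, observe the exact cancellation of the coupling terms, integrate in time, move the time derivative off $U_\e$ in the source term by integration by parts, control the initial data via $U_\e(0)=E_\e^{-1}\mathcal{F}_\e(0)$, and close with Korn, Young, and Gr\"onwall, with all constants depending only on $\Omega$, $\mathcal{C}$, $\mathcal{D}$. The only difference is cosmetic: the paper flags the formal testing with $\partial_t U_\e$ as needing a difference-quotient justification, whereas you invoke the regularity $\partial_t U_\e\in L^2(S;H_0^1(\Omega)^3)$ from \Cref{3:lem_solution} directly, which is a legitimate shortcut given that lemma.
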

\begin{proof}
We formally test the weak formulation with $(\partial_t U_\e,p_\e)$\footnote{This is not rigorous as $\partial_t U_\e$ does not need to be in $H_0^1(\Omega)^3$; this gap can be closed, however, by using difference quotients and doing a limit analysis, see, e.g., \cite{GT01}.} and get
\begin{multline*}
\int_\Omega\left(\chi_{\Omega_\e^f}\mathcal{C}+\chi_{\Omega_\e^g}\mathcal{D}\right)e(U_\e)\colon e(\partial_tU_\e)\di{x}
-\int_{\Omega_\e^g}\alpha p_\e\nabla\cdot \partial_tU_\e\di{x}\\
=\int_{\Omega_\e^f}f\cdot \partial_tU_\e\di{x}+\int_{\Omega_\e^g}g\cdot \partial_tU_\e\di{x},
\end{multline*}
\begin{align*}
\left\langle\partial_t\left(cp_\e+\alpha\nabla\cdot u_\e\right),p_\e\right\rangle_{H^1(\Omega_\e^g)^*}
+\int_{\Omega_\e^g}\e^2 K\nabla p_\e\cdot\nabla p_\e\di{x}
&=\int_{\Omega_\e^g}hp_\e\di{x}.
\end{align*}
Since $K$ is positive definite (i.e., there is $c_K>0$ such that $Kx\cdot x\geq c_k|x|^2$ for all $x\in\R^3$), we infer that
\begin{align*}
\left\langle\partial_t\left(cp_\e+\alpha\nabla\cdot u_\e\right),p_\e\right\rangle_{H^1(\Omega_\e^g)^*}
+\e^2 c_K\|\nabla p_\e\|^2_{L^2(\Omega_\e^g)}
&\leq\int_{\Omega_\e^g}hp_\e\di{x}.
\end{align*}
Integrating both equations with respect to time for some $t>0$, we get
\begin{multline*}
\int_0^t\int_\Omega\left(\chi_{\Omega_\e^f}\mathcal{C}+\chi_{\Omega_\e^g}\mathcal{D}\right)e(U_\e)\colon e(\partial_tU_\e)\di{x}\di{\tau}
-\int_0^t\int_{\Omega_\e^g}\alpha p_\e\nabla\cdot \partial_tU_\e\di{x}\di{\tau}\\
=\int_0^t\int_{\Omega_\e^f}f\cdot \partial_tU_\e\di{x}\di{\tau}+\int_0^t\int_{\Omega_\e^g}g\cdot \partial_tU_\e\di{x}\di{\tau},
\end{multline*}
\begin{align*}
\int_0^t\left\langle\partial_t\left(cp_\e+\alpha\nabla\cdot u_\e\right),p_\e\right\rangle_{H^1(\Omega_\e^g)^*}\di{\tau}
+\int_0^t\e^2 c_K\|\nabla p_\e\|^2_{L^2(\Omega_\e^g)}\di{\tau}
&=\int_0^t\int_{\Omega_\e^g}hp_\e\di{x}\di{\tau}.
\end{align*}
Adding both the equations leads to
\begin{multline*}
c\int_0^t\langle\partial_tp_\e,p_\e\rangle_{H^1(\Omega_\e^g)^*}\di{\tau}\\
+\int_0^t\int_\Omega\left(\chi_{\Omega_\e^f}\mathcal{C}+\chi_{\Omega_\e^g}\mathcal{D}\right)e(U_\e)\colon e(\partial_tU_\e)\di{x}\di{\tau}
+\e^2 c_K\int_0^t\|\nabla p_\e\|^2_{L^2(\Omega_\e^g)}\di{\tau}\\
\leq\int_0^t\int_{\Omega_\e^g}hp_\e\di{x}\di{\tau}+\int_0^t\int_{\Omega_\e^f}f\cdot \partial_tU_\e\di{x}\di{\tau}+\int_0^t\int_{\Omega_\e^g}g\cdot\partial_tU_\e\di{x}\di{\tau}.
\end{multline*}
Further estimating the terms with time derivative gives us (here, $c_\mathcal{A}$ denotes the minimum of the positivity constants of $\mathcal{C}$ and $\mathcal{D}$)
\begin{multline*}
c\|p_\e(t)\|_{L^2(\Omega_\e^g)}^2
+c_{\mathcal{A}}\|e(U_\e)(t)\|^2_{L^2(\Omega)}
+2\e^2 c_K\int_0^t\|\nabla p_\e\|^2_{L^2(\Omega_\e^g)}\di{\tau}\\
\leq c\|p_\e(0)\|^2_{L^2(\Omega_\e^f)}
+c_{\mathcal{A}}\|e(U_\e)(0)\|^2_{L^2(\Omega)}\\
+2\int_0^t\int_{\Omega_\e^g}hp_\e\di{x}\di{\tau}
+2\int_0^t\int_{\Omega_\e^f}f\cdot \partial_tU_\e\di{x}\di{\tau}
+2\int_0^t\int_{\Omega_\e^g}g\cdot\partial_tU_\e\di{x}\di{\tau}.
\end{multline*}
Integration by parts with respect to time gives
\begin{align*}
\int_0^t\int_{\Omega_\e^f}f\cdot \partial_tU_\e\di{x}\di{\tau}
=-\int_0^t\int_{\Omega_\e^f}\partial_tf\cdot U_\e\di{x}\di{\tau}
+\left[\int_{\Omega_\e^f}f\cdot U_\e\di{x}\right]_0^t.
\end{align*}
As a consequence, we are led to (using also $p_\e(0)=0$ and $U_\e(0)=-E^{-1}_\e\mathcal{F}_\e(0)$)
\begin{multline*}
c\|p_\e(t)\|_{L^2(\Omega_\e^g)}^2
+c_{\mathcal{A}}\|e(U_\e)(t)\|^2_{L^2(\Omega)}
+2\e^2 c_K\int_0^t\|\nabla p_\e\|^2_{L^2(\Omega_\e^g)}\di{\tau}\\
\leq c_{\mathcal{A}}C_E\left(\|f(0)\|^2_{L^2(\Omega_\e^f)}+\|g(0)\|^2_{L^2(\Omega_\e^g)}\right)
+2\int_0^t\int_{\Omega_\e^g}hp_\e\di{x}\di{\tau}\\
-2\int_0^t\int_{\Omega_\e^f}\partial_tf\cdot U_\e\di{x}\di{\tau}
-2\int_0^t\int_{\Omega_\e^g}\partial_tg\cdot U_\e\di{x}\di{\tau}\\
+\left[\int_{\Omega_\e^f}f\cdot U_\e\di{x}+\int_{\Omega_\e^g}g\cdot U_\e\di{x}\right]_0^t.
\end{multline*}
Here, $C_E$ denotes the continuity constant of the operator $E_\e$.
With $c_{Ko}>0$ denoting the Korn's inequality constant, we then have 
\begin{multline*}
c\|p_\e(t)\|_{L^2(\Omega_\e^g)}^2
+c_{Ko}c_{\mathcal{A}}\|U_\e(t)\|^2_{H^1_0(\Omega)}
+2\e^2 c_K\int_0^t\|\nabla p_\e\|^2_{L^2(\Omega_\e^g)}\di{\tau}\\
\leq c_{\mathcal{A}}C_E\left(\|f(0)\|^2_{L^2(\Omega_\e^f)}+\|g(0)\|^2_{L^2(\Omega_\e^g)}\right)
+2\int_0^t\int_{\Omega_\e^g}hp_\e\di{x}\di{\tau}\\
-2\int_0^t\int_{\Omega_\e^f}\partial_tf\cdot U_\e\di{x}\di{\tau}
-2\int_0^t\int_{\Omega_\e^g}\partial_tg\cdot U_\e\di{x}\di{\tau}\\
+\left[\int_{\Omega_\e^f}f\cdot U_\e\di{x}+\int_{\Omega_\e^g}g\cdot U_\e\di{x}\right]_0^t.
\end{multline*}
Then, applying Young's inequality and setting $\tilde{c}=c_{Ko}c_{\mathcal{A}}$, we are led to
\begin{multline*}
c\|p_\e(t)\|_{L^2(\Omega_\e^g)}^2
+\frac{\tilde{c}}{2}\|U_\e(t)\|^2_{H^1_0(\Omega)}
+2\e^2 c_K\int_0^t\|\nabla p_\e\|^2_{L^2(\Omega_\e^g)}\di{\tau}\\
\leq\left(c_{\mathcal{A}}C_E+\frac{C_E}{2}+\frac{1}{2}\right)\left(\|f(0)\|^2_{L^2(\Omega_\e^f)}+\|g(0)\|^2_{L^2(\Omega_\e^g)}\right)
\\
+2\int_0^t\left(\|h\|_{L^2(\Omega_\e^g)}^2+\|\partial_tf\|_{L^2(\Omega_\e^f)}^2+\|\partial_tg\|_{L^2(\Omega_\e^g)}^2\right)\di{\tau}\\
+2\int_0^t\left(\|U_\e\|_{L^2(\Omega)}^2+\|p_\e\|_{L^2(\Omega_\e^g)}^2\right)\di{\tau}
+\frac{1}{2\tilde{c}}\left(\|f\|^2_{C(\overline{S};L^2(\Omega_\e^f))}+\|g\|^2_{C(\overline{S};L^2(\Omega_\e^g))}\right).
\end{multline*}
Finally, using Gronwall's inequality, we conclude that there is $C>0$, which is independent of the choice of $\e$, such that
\begin{multline*}
\|p_\e\|_{L^\infty(S;L^2(\Omega_\e^g))}^2
+\|U_\e(t)\|^2_{L^\infty(S;H^1_0(\Omega))}
+\e^2\|\nabla p_\e\|^2_{L^2(S\times\Omega_\e^g)}\\
\leq C\Bigg(\|f\|^2_{C^1(\overline{S};L^2(\Omega_\e^f)}+\|g\|^2_{C^1((\overline{S};L^2(\Omega_\e^g)}
+\|h\|_{L^2(S\times\Omega_\e^g)}^2
\Bigg).
\end{multline*}
\end{proof}

\section{Homogenization}\label{sec:4}
In this section, we are considering the limit process $\e\to0$ in the context of the two-scale convergence technique.
For the convenience of the reader, we shortly recall the definition and present the main results used here in the appendix.

%

We notice that the two-scale convergence is defined for the fixed domain and as the solution $u_\e$, $w_\e$ and $p_\e$ are defined on the domains $\Omega^g_\e$, $\Omega^f_\e$ and $\Omega^g_\e$, respectively, in order to apply the definition and the results of two-scale convergence, we need to define the solution on the whole domain $\Omega$.
Generally speaking, in a nonlinear setting this would require the use of so called \emph{extension operators} (see, e.g., \cite{AC92,M03}); since we are working with a linear problem simply extending by zero is sufficient.

In the following, for every function $\psi$ defined on either $\Omega_\e^f$ or $\Omega_\e^g$, $\widehat{\psi}$ will denote the zero extension the the whole of $\Omega$.
With that, we can discuss the two scale limit of $u_\e, w_\e, p_\e$ and their derivatives. This is addressed in the following lemma:

\begin{lemma}\label{lem:hom_limit}
There exist functions $U\in L^2(S;H^{1}(\Omega))^3$, $p\in L^2(S\times\Omega;H^{1}(Y))^3$, and $u^1\in L^2(S\times\Omega;H^{1}(Y))^3$ such that
\begin{alignat*}{2}
i)\ \widehat{U_\e}\to U ,\quad ii)\  \widehat{e(U_\e)}\overset{2}{\rightharpoonup}e(U)+e_y(u^1),\quad
iii)\  p_\e\overset{2}{\rightharpoonup}p ,\quad iv)\  \e\nabla p_\e\overset{2}{\rightharpoonup}\nabla_yp.
\end{alignat*}
\end{lemma}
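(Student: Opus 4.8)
The plan is to extract all four convergences from the single uniform bound \cref{eq:3.5} of \Cref{3:lem_estimates}, combined with the two-scale compactness theorems recalled in the appendix. Reading off \cref{eq:3.5}, the families $\{U_\e\}$ in $L^2(S;H_0^1(\Omega))^3$, $\{\widehat{p_\e}\}$ in $L^2(S\times\Omega)$, and $\{\e\,\widehat{\nabla p_\e}\}$ in $L^2(S\times\Omega)^3$ are all bounded independently of $\e$. Passing to a common, non-relabelled subsequence, each bounded family possesses a two-scale limit, and the actual work lies in identifying the fine structure of those limits.

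For (i)--(ii) I would argue as follows. Boundedness in $L^2(S;H_0^1(\Omega))^3$ yields a weak limit $U\in L^2(S;H_0^1(\Omega))^3$ with $\widehat{U_\e}\rightharpoonup U$; to promote this to the strong convergence claimed in (i) I would invoke an Aubin--Lions compactness argument, supplying the missing time-compactness from the control of $\partial_t U_\e$ produced in \Cref{3:lem_solution}, so that $\widehat{U_\e}\to U$ strongly in $L^2(S\times\Omega)^3$. The oscillatory part of the gradient is then captured by the standard gradient two-scale compactness result: applied to the symmetric gradient it gives $\widehat{e(U_\e)}\overset{2}{\rightharpoonup}e(U)+e_y(u^1)$ for a corrector $u^1\in L^2(S\times\Omega;H_{per}^1(Y))^3$, which is exactly (ii).

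For (iii)--(iv) I would treat the two statements as a single one. The relevant tool is the compactness result for sequences whose gradient is bounded only after the $\e$-rescaling: from $\widehat{p_\e}$ bounded in $L^2$ and $\e\,\widehat{\nabla p_\e}$ bounded in $L^2$ one obtains a limit $p$ with genuine $y$-regularity, $p\in L^2(S\times\Omega;H^1(Y))$, together with $p_\e\overset{2}{\rightharpoonup}p$ and $\e\nabla p_\e\overset{2}{\rightharpoonup}\nabla_y p$, the \emph{same} $p$ appearing in both. The content of (iv) is precisely that the surviving microscopic gradient is the weak $y$-derivative of the two-scale limit of $p_\e$ itself.

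The main obstacle is the disconnected geometry of the gel phase. Because $p_\e$ lives only on the union of the isolated inclusions $\e(Y^g+k)$, every compactness statement must be applied to the zero extension $\widehat{p_\e}$, and one must verify that its two-scale limit is supported in $Y^g$ and that $\e\nabla p_\e\overset{2}{\rightharpoonup}\nabla_y p$ with $\nabla_y p$ vanishing on $Y^f$; this is where the no-flux interface condition and the confinement of $p_\e$ to the inclusions enter. In contrast to the connected-phase case there is no periodicity constraint forcing $p$ to match across cell faces, so the natural regularity is $H^1(Y^g)$ rather than $H_{per}^1(Y)$. Keeping this restriction and the support condition consistent between (iii) and (iv) — so that one and the same $p$ serves in both — is the delicate bookkeeping that the proof has to carry out.
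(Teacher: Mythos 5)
Your proposal follows essentially the same route as the paper: all four convergences are extracted from the uniform bound of \Cref{3:lem_estimates} via the two-scale compactness results collected in the appendix (\Cref{lem:5.2,lem:5.6,lem:5.7}), with (ii) obtained by symmetrizing the gradient corrector $\nabla U_\e\overset{2}{\rightharpoonup}\nabla U+\nabla_y u^1$ supplied by \Cref{lem:5.6}. Your Aubin--Lions step is a sensible supplement --- the paper asserts the strong convergence in (i) without explicitly justifying it --- and your remarks on the disconnected gel phase and the support of $p$ make explicit a point the paper leaves implicit, but neither changes the substance of the argument.
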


\begin{proof}
The convergences $(i)$, $(iii)$, and $(iv)$ follow from the a-priori estimates given by \Cref{3:lem_estimates} and Lemmas \ref{lem:5.2}, \ref{lem:5.6} and \ref{lem:5.7}.
Moreover, we have $u^1\in L^2(S\times\Omega;H^{1}(Y))^3$ such that

$$
\nabla U_\e\overset{2}{\rightharpoonup}\nabla U+\nabla_yu^1.
$$
Let us choose $\phi\in C_0^\infty(S\times \Omega)^3$ and see that

\begin{align*}
\lim_{\e\to 0}\int_{S\times\Omega}e(u_\e)\phi\di{x}\di{t}&=\frac{1}{2}\lim_{\e\to 0}\int_{S\times\Omega}[\nabla u_\e+(\nabla u_\e)^t]\phi\di{x}\di{t}\\
&\overset{2}{\rightharpoonup}\frac{1}{2}\int_{S\times\Omega\times Y}\left[(\nabla u+\nabla_y u^1)\phi+(\nabla u+\nabla_y u^1)^T\phi\right]\di{(x,y)}\di{t}\\
&=\int_{S\times\Omega\times Y}\left(e(U)+e_y(u^1)\right)\phi\di{(x,y)}\di{t}.\notag
\end{align*}
\end{proof}

Our goal is to pass the two-scale limit in each equation of the model (\ref{eq:2.1a})-(\ref{seq:8}) using the limits given in \Cref{lem:hom_limit}.
%
%
%
%
We will first pass the two-scale limit in the momentum equation. 
To that end, let $\phi_0\in C_0^\infty(\Omega)^3$ and $\phi_1\in C_0^\infty(\Omega;C^\infty_{\#}(Y))^3$ such that we choose the test function as $v_\e(x)=v_0(x)+\e v_1(x,\frac{x}{\e})$ in \cref{weak.1}, i.e.,

\begin{equation*}
\int_\Omega\mathcal{A}\,e(U_\e)\colon e(v_\e)\di{x}
-\int_{\Omega}\alpha \widehat{p_\e}\nabla\cdot v_\e\di{x}\\
=\int_{\Omega}\left(\widehat{f}+\widehat{g}\right)\cdot v_\e\di{x}.
\end{equation*}
Now, due to $\nabla v_\e=\nabla v_0+\e \nabla_xv_1 +\nabla v_1$ (here, $x$ and $y$ denote the differentiation with respect to $x\in\Omega$ and $y\in Y$, respectively), this leads to

\begin{multline}\label{hom_1}
\int_\Omega\mathcal{A}_\e\,e(U_\e)\colon\left(e(v_0)+\e e_x(v_1) +e_y(v_1)\right)\di{x}\\
-\int_{\Omega}\alpha\,\widehat{p_\e}\left(\nabla\cdot v_0+\e \nabla_x\cdot v_1 +\nabla_y\cdot v_1\right)\di{x}
=\int_{\Omega}\left(\widehat{f}+\widehat{g}\right)\cdot\left(v_0+\e v_1\right)\di{x}.
\end{multline}
As the $L^2$ norms of $e(U_\e)$, $\widehat{p_\e}$, $\widehat{f}$, and $\widehat{g}$ are bounded with respect to $\e$, we find that the integrals

$$
\e\int_\Omega\mathcal{A}_\e\,e(U_\e)\colon e_x(v_1)\di{x},\quad\e\int_{\Omega}\alpha\,\widehat{p_\e}\nabla_x\cdot v_1\di{x},\quad\e\int_{\Omega}\left(\widehat{f}+\widehat{g}\right)\cdot v_1\di{x}
$$
converge to 0 for $\e\to0$.

Passing to the two-scale limits as $\e\to0$ in \cref{hom_1}, we are therefore led to

\begin{multline}\label{eq:hom_2}
\int_{\Omega\times Y}\mathcal{A}\left(e(U)+e_y(U^1)\right)\colon\left(e(v_0)+e_y(v_1)\right)\di{(x,y)}\\
-\int_{\Omega\times Y}\chi_g\alpha\,p\left(\nabla\cdot v_0+\nabla_y\cdot v_1\right)\di{(x,y)}
=\int_{\Omega\times Y}\left(\chi_ff+\chi_gg\right)\cdot v_0\di{(x,y)}
\end{multline}

Here, $\chi_f$ and $\chi_g$ denote the charateristic function of $Y_f$ and $Y_g$, respectively, and $\mathcal{A}=\chi_f\mathcal{C}+\chi_g\mathcal{D}$.
By density arguments, \cref{eq:hom_2} also holds true for all $(v_0,v_1)\in W_0^{1,2}(\Omega)^3\times L^2(\Omega;W^{1,2}_\#(Y))^3$.

Now, when choosing $v_0=0$, we arrive at the problem

$$
\int_{\Omega\times Y}\mathcal{A}\left(e(U)+e_y(U^1)\right)e_y(v_1)\di{(x,y)}\\
=\int_{\Omega\times Y}\chi_g\alpha\,p\nabla_y\cdot v_1\di{(x,y)}
$$
for all $v_1\in L^2(\Omega;W^{1,2}_\#(Y))^3$, which can be localized to

$$
\int_{Y}\mathcal{A}\left(e(U)+e_y(U^1)\right)e_y(v_1)\di{y}\\
=\int_{Y}\chi_g\alpha\,p\nabla_y\cdot v_1\di{y}\quad (v_1\in W^{1,2}_\#(Y)^3,\ \text{a.e.~in }\Omega).
$$
Similarly, choosing $\phi_1=0$, we get

\begin{multline}\label{eq:hom_3}
\int_{\Omega\times Y}\mathcal{A}\left(e(U)+e_y(U^1)\right)\colon e(v_0)\di{(x,y)}\\
-\int_{\Omega\times Y}\chi_g\alpha\,p\nabla\cdot v_0\di{(x,y)}
=\int_{\Omega\times Y}\left(\chi_ff+\chi_gg\right)\cdot v_0\di{(x,y)}
\end{multline}
for all $v_0\in W^{1,2}_0(\Omega)^3$.

Next, we will pass the two-scale limit in (\ref{eq:2.1c}).
We choose the test function $\phi_\e=\phi(t,x,x/\e)$ for $\phi\in C_0^\infty(S\times \Omega;C^\infty_{\#}(Y))$ and get

\begin{multline*}
-\int_{S}\int_\Omega\chi_{\Omega^g_\e}(c\widehat{p_\e}+\alpha\nabla \cdot U_\e)\partial_t\phi_\e\di{x}\di{t}+\e^2\int_{S}\int_\Omega\chi_{\Omega^g_\e}\mathcal{K}\widehat{\nabla p_\e}\cdot(\nabla \phi_\e+\e^{-1}\nabla_y \phi_\e)\di{x}\di{t}\\
=\int_{S}\int_\Omega\chi_{\Omega^g_\e}\widehat{h}\phi_\e\di{x}\di{t}
\end{multline*}

We pass to the two-scale limit as $\e\to 0$

\begin{multline*}
-\int_S\int_{\Omega\times Y}\chi_{g}\left(cp+\alpha(\nabla U+\nabla_y\cdot U)\right)\partial_t\phi\di{(x,y)}\di{t}\\
+\int_S\int_{\Omega\times Y}\chi_g\mathcal{K}\nabla_y p\cdot\nabla \phi\di{(x,y)}\di{t}
=\int_S\int_{\Omega\times Y}\chi_gh\phi\di{(x,y)}\di{t}.
\end{multline*}
which, by density, holds true for all $\phi\in L^2(S\times\Omega;W^{1,2}_\#(Y))$.
As all terms are restricted to $Y_g$, we can restrict to $\phi\in L^2(S\times\Omega;W^{1,2}(Y^g))$ (note that the periodicity property disappears since $Y_g$ does not touch $\Gamma$).
Moreover, we can localize in $x\in\Omega$ and arrive at 

\begin{multline*}
-\int_S\int_{Y^g}\left(cp+\alpha(\nabla U+\nabla_y\cdot U)\right)\partial_t\phi\di{y}\di{t}\\
+\int_S\int_{Y^g}\mathcal{K}\nabla_y p\cdot\nabla \phi\di{y}\di{t}
=\int_S\int_{Y^g}h\phi\di{y}\di{t}.
\end{multline*}
which  holds true for all $\phi\in L^2(S;W^{1,2}_\#(Y))$ almost eveywhere in $\Omega$.

Summarizing this limit process, we obtain the following system of variational equalities given via

\begin{subequations}
\begin{align}\label{hom:vara}
\int_{Y}\mathcal{A}\left(e(U)+e_y(U^1)\right)\colon e_y(v_1)\di{y}
=\int_{Y^g}\alpha\,p\nabla_y\cdot v_1\di{y},
\end{align}
\begin{multline}\label{hom:varb}
\int_{\Omega\times Y}\mathcal{A}\left(e(U)+e_y(U^1)\right)\colon e(v_0)\di{(x,y)}\\
-\int_{\Omega\times Y^g}\alpha\,p\nabla\cdot v_0\di{(x,y)}
=\int_{\Omega\times Y}\left(\chi_ff+\chi_gg\right)\cdot v_0\di{(x,y)},
\end{multline}
\begin{multline}\label{hom:varc}
-\int_S\int_{Y^g}\left(cp+\alpha(\nabla U+\nabla_y\cdot U)\right)\partial_t\phi\di{y}\di{t}\\
+\int_S\int_{Y^g}\mathcal{K}\nabla_y p\cdot\nabla \phi\di{y}\di{t}
=\int_S\int_{Y^g}h\phi\di{y}\di{t}
\end{multline}
for all $(v_0,v_1,\phi)\in W^{1,2}_0(\Omega)^3\times W^{1,2}_\#(Y)^3\times L^2(S;W^{1,2}(Y^g))$.
\end{subequations}

We go on by introducing cell problems and effective quantities in order to get a more accessible form of the homogenization limit.
For $j,k=\{1,2,3\}$, $t\in S$, and $x\in\Omega$ let $\tau_{jk}(t,x,\cdot),\vartheta(t,x,\cdot)\in W^{1,2}_\#(Y)^3$ be solutions of

\begin{subequations}
\begin{align}
0&=\int_{Y}\mathcal{A}\, e_y(\tau_{jk}+y_je_k)\colon e_y(v_1)\di{y}\label{sys:h1}.
\end{align}
for all $v_1\in W^{1,2}_0(\Omega)^3$.
Here, $e_k$ denotes the $k$-th unit vector and $y_j$ the $j$-th coordinate of $y\in Y$.

\begin{remark}
Solutions of the variational problems of \cref{sys:h1} are unique up to constants.
Utilizing Korns inequality, this can be shown via the Lemma of Lax-Milgram with respect to the Banach space of functions with zero average $\{u\in W^{1,2}_0(\Omega)^3\ : \ \int_Y u\di{y}=0\}$.
\end{remark}

We introduce the constant effective elasticity tensor $\mathcal{A}^h\in\R^{3\times3\times3\times3}$, the constant effective Biot-Willis parameter $\alpha^h\in\R^{3\times3}$, as well as the averaged volume force densities $F\colon S\times\Omega\to\R^3$ via
\begin{align}
(\mathcal{A}^h)_{ijkl}&=\int_Y\mathcal{A}\,e_y(\tau_{ij}+y_ie_j)\colon e_y(\tau_{kl}+y_ke_l)\di{y},\label{sys:h3}\\
\alpha^h&=\alpha\left(1+\int_{Y}\nabla_y\tau_{jk}\di{y}\right),\label{sys:h4}\\
F(t,x)&=\int_{Y^f}f\di{y}+\int_{Y^g}g\di{y}\label{sys:h5}.
\end{align}

\end{subequations}
We introduce the function

$$
\tilde{u}\colon S\times\Omega\times Y\to\R^3\quad\text{via}\quad \tilde{u}(t,x,y)=\sum_{j,k=1}^3\tau_{jk}(y)\left(e(U)(t,x)\right)_{jk}-U^1(t,x,y).
$$

\begin{remark}
In the standard mechnanical case, without the pressure, we would have $\tilde{u}=0$ (as $U^1$ can then be represented as a linear combination of derivatives of $U$ and the solution of the cell problem \cref{sys:h1}).
When $p$ is constant over $Y$, we have $\tilde{u}=\tau p$, where $\tau(t,x,\cdot)\in W^{1,2}_\#(Y)$ is a solution of the cell problem
\begin{align*}
0&=\int_{Y}\mathcal{A}\, e_y(\tau)\colon e_y(v_1)\di{y}-\int_{Y^g}\alpha\nabla\cdot v_1\di{y}.
\end{align*}
\end{remark}

Expressing $U^1$ in terms of $U$ and $\tilde{u}$ and inserting it into the variational equality \cref{hom:vara}, we calculate using the cell problem \cref{sys:h1} that

\begin{equation}
\int_{Y}\mathcal{A}\, e_y(\tilde{u})\colon e_y(v_1)\di{y}
=\int_{Y^g}\alpha\,p\nabla_y\cdot v_1\di{y}.\label{sys:h6}
\end{equation}
In its localized form, this corresponds to the PDE ($\tilde{u}=\widehat{\tilde{u}_f}+\widehat{\tilde{u}_g}$)

\begin{alignat*}{2}
-\nabla_y\cdot\left(\mathcal{C}\, e_y(\tilde{u}_f)\right)&=0&\quad&\text{in}\ \ Y^f,\\
-\nabla_y\cdot\left(\mathcal{D}\, e_y(\tilde{u}_g)-\alpha\,p\mathds{I}_3\right)&=0&\quad&\text{in}\ \ Y^g,\\
-\left(\mathcal{D}\, e_y(\tilde{u}_g)-\alpha\,p\mathds{I}_3\right)n_\Gamma&=-\mathcal{C}\, e_y(\tilde{u}_f)n_\Gamma&\quad&\text{on}\ \ \Gamma,\\
\tilde{u}_g&=\tilde{u}_f&\quad&\text{on}\ \ \Gamma,\\
y&\mapsto \tilde{u}\quad &&Y\text{-periodic}.
\end{alignat*}

With the effective elasticity tensor $A^h$ and the effective Biot-Willis matrix $\alpha^h$, the system given by equations \cref{hom:vara,hom:varb,hom:varc} for $(U,U^1,p)$ can equivalently be written as a problem for $(U,\tilde{u},p)$:

\begin{subequations}
\begin{align}\label{homalt:vara}
\int_{Y}\mathcal{A}\, e_y(\tilde{u})\colon e_y(v_1)\di{y}
=\int_{Y^g}\alpha\,p\nabla_y\cdot v_1\di{y},
\end{align}
\begin{multline}\label{homalt:varb}
\int_{\Omega}\mathcal{A}^h\, e(U)\colon e(v_0)\di{x}
+\int_{\Omega}\int_{Y}\mathcal{A}\,e_y(\tilde{u})\di{y}\colon e(v_0)\di{x}\\
-\int_{\Omega}\alpha\int_{Y_g}p\di{y}\nabla\cdot v_0\di{x}
=\int_{\Omega}F\cdot v_0\di{x},
\end{multline}
\begin{multline}\label{homalt:varc}
-\int_S\int_{Y^g}\left(cp+\alpha^h\colon U+\dive_y\tilde{u}\right)\partial_t\phi\di{y}\di{t}\\
+\int_S\int_{Y^g}\mathcal{K}\nabla_y p\cdot\nabla \phi\di{y}\di{t}
=\int_S\int_{Y^g}h\phi\di{y}\di{t}
\end{multline}
for all $(v_0,v_1,\phi)\in W^{1,2}_0(\Omega)^3\times W^{1,2}_\#(Y)^3\times L^2(S;W^{1,2}(Y^g))$.
\end{subequations}

\begin{remark}
In this form, the function $\tilde{u}$ can me interpreted as a micro deformation which leads to additional stresses in the macro mechanical problem.
Please note that the above system of three equations is strongly coupled.
Solving \cref{homalt:vara} for $\tilde{u}$ in terms of $p$ and introducing the corresponding linear and continuous solution operator $\mathcal{L}\colon L^2(Y^f)\to H^1(Y)$, we could substitute $\tilde{u}=\mathcal{L}p$ in \cref{homalt:varb,homalt:varc}, thereby eliminating the variable $\tilde{u}$.

\end{remark}

The system given by \cref{homalt:vara,homalt:varb,homalt:varc} corresponds to the following system of PDEs:

\begin{subequations}
\begin{mybox}{Limit problem in (strong) PDE form}
\begin{mybox}{Effective, macroscopic mechanics}
\vspace{-.3cm}
\begin{alignat}{2}
-\nabla\cdot\left(\mathcal{A}^he(U)+\mathcal{A}[e_y(\tilde{u})]_Y-\alpha[p]_{Y^g}\right)&=F&\quad&\text{in}\ \Omega,\label{hres1}\\
U&=0&\quad&\text{on}\ \partial\Omega.\label{hres2}
\end{alignat}
\end{mybox}

\begin{mybox}{Micro mechanical correction, $\tilde{u}=\widehat{\tilde{u}_f}+\widehat{\tilde{u}_g}$}
\vspace{-.3cm}
\begin{alignat}{2}
-\nabla_y\cdot\left(\mathcal{C}\, e_y(\tilde{u}_f)\right)&=0&\quad&\text{in}\ \ Y^f,\label{hres3}\\
-\nabla_y\cdot\left(\mathcal{D}\, e_y(\tilde{u}_g)-\alpha\,p\mathds{I}_3\right)&=0&\quad&\text{in}\ \ Y^g,\label{hres4}\\
-\left(\mathcal{D}\, e_y(\tilde{u}_g)-\alpha\,p\mathds{I}_3\right)n_\Gamma&=-\mathcal{C}\, e_y(\tilde{u}_f)n_\Gamma&\quad&\text{on}\ \ \Gamma,\label{hres5}\\
\tilde{u}_g&=\tilde{u}_f&\quad&\text{on}\ \ \Gamma,\label{hres6}\\
y&\mapsto \tilde{u}\quad &&Y\text{-periodic}.\label{hres7}
\end{alignat}
\end{mybox}

\begin{mybox}{Micro pore pressure}
\vspace{-.3cm}
\begin{alignat}{2}
\partial_t\left(cp+\alpha^h:\nabla U+\alpha\dive_y\tilde{u}\right)-\nabla\cdot\left(\mathcal{K}\nabla_y p\right)
&=h&\quad&\text{in}\ \ S\times Y^f\label{hres8}\\
-K_y\nabla p\cdot n_\Gamma&=0&\quad&\text{on}\ \ S\times\Gamma,\label{hres9}\\
p(0)&=0&\quad&\text{on}\ \ \Gamma.\label{hres0}
\end{alignat}
\end{mybox}
\end{mybox}
\end{subequations}

Here, $[\psi]_A$ denotes integration of a function $\psi$ over a domain $A$.

\section*{Acknowledgments}
The second author would like to thank the \emph{AG Modellierung und PDEs} at the University of Bremen for the kind invitation to visit their work group and work on  this problem.

\bibliographystyle{plain}
\bibliography{references}

\section{Appendix}

\subsection{\texttt{\textsf{Two-scale Convergence}}}\label{sec3.5}

Let $1\le p,q,<\infty$ such that $\frac{1}{p}+\frac{1}{q}=1$.

\begin{definition}\label{thm3.5.1}
Let $\varepsilon $ be a sequence of positive real numbers converging to 0. A sequence of functions $\left(u_\varepsilon\right)_{\varepsilon >0}$ in $L^{p}(\Omega)$ is said to two-scale convergent to a limit $u\in L^{p}(\Omega \times Y)$ if
\begin{equation}
\underset{\varepsilon \to 0}{\lim}\int_{\Omega}u_{\varepsilon}(x)\phi (x,\frac{x}{\varepsilon})\,dx=\int_{\Omega}\int_{Y}u(x,y)\phi (x,y)\,dx\,dy,\label{eqn3.5.1}
\end{equation}
\textnormal{ for all }$\phi \in L^{q}(\Omega; C_{\#}(Y))$.\footnote{$C_{\#}(Y)$ denotes the space of $Y$-periodic continuous functions in $y\in Y$.}
\end{definition}
If $(u_{\varepsilon})_{\varepsilon >0}$ is two-scale convergent to $u$ then we write $u_{\varepsilon}\overset{2}{\rightharpoonup}u$. The above definition is followed from the following theorem which is proved by Nguetseng (cf. Theorem 1 in \cite{Ngu89}):
\begin{lemma}\label{lem:5.2}
For every bounded sequence, $(u_{\varepsilon})_{\varepsilon >0}$, in $L^{p}(\Omega)$ there exist a subsequence $(u_{\varepsilon})_{\varepsilon >0}$ (still denoted by same symbol) and a $u\in L^{p}(\Omega \times Y)$ such that $u_{\varepsilon}\overset{2}{\rightharpoonup}u$.
\end{lemma}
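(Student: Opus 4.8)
The plan is to realize two-scale convergence as weak-$*$ convergence of a sequence of linear functionals and then extract a convergent subsequence by a Banach--Alaoglu argument. Concretely, for each $\e>0$ I would introduce the functional
$$
T_\e\colon L^q(\Omega;C_\#(Y))\to\R,\qquad T_\e(\phi)=\int_\Omega u_\e(x)\,\phi\!\left(x,\tfrac{x}{\e}\right)\di{x}.
$$
Since $\Omega$ has finite measure and $C_\#(Y)$ is separable, the space $X:=L^q(\Omega;C_\#(Y))$ is a separable Banach space, so closed bounded subsets of its dual are sequentially weak-$*$ compact. The whole argument then reduces to (a) bounding the $T_\e$ uniformly in $X^*$, and (b) identifying any weak-$*$ limit with an $L^p(\Omega\times Y)$ function. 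Throughout I use that $1/p+1/q=1$ with both exponents finite forces $1<p,q<\infty$, so that $L^q(\Omega\times Y)$ is reflexive with dual $L^p(\Omega\times Y)$.

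For the uniform bound, the essential ingredient is the oscillation (mean-value) lemma: for every admissible $\phi\in X$ the map $x\mapsto\phi(x,x/\e)$ is measurable and
$$
\lim_{\e\to0}\int_\Omega\left|\phi\!\left(x,\tfrac{x}{\e}\right)\right|^q\di{x}=\int_{\Omega\times Y}|\phi(x,y)|^q\di{(x,y)}=\|\phi\|_{L^q(\Omega\times Y)}^q.
$$
I would prove this first on the dense subclass of finite sums $\sum_i a_i(x)b_i(y)$ with $a_i\in C(\overline{\Omega})$ and $b_i\in C_\#(Y)$, where it follows from the classical averaging fact that $b(\cdot/\e)\rightharpoonup\int_Y b\di{y}$ weakly-$*$ in $L^\infty(\Omega)$ (applied to $|b|^q$), and then pass to general $\phi$ by a density argument controlling $\|\phi(\cdot,\cdot/\e)\|_{L^q}$ uniformly in $\e$. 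Combining this with H\"older's inequality,
$$
|T_\e(\phi)|\le\|u_\e\|_{L^p(\Omega)}\left\|\phi(\cdot,\tfrac{\cdot}{\e})\right\|_{L^q(\Omega)},
$$
and the hypothesis $\sup_\e\|u_\e\|_{L^p(\Omega)}=:C<\infty$ shows that $(T_\e)$ is bounded in $X^*$.

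By sequential weak-$*$ compactness I then extract a subsequence (not relabelled) with $T_\e\overset{*}{\rightharpoonup}\mu$ in $X^*$. Passing to the limit in the H\"older bound and invoking the oscillation lemma gives $|\mu(\phi)|\le C\,\|\phi\|_{L^q(\Omega\times Y)}$ for all $\phi\in X$. Since $X=L^q(\Omega;C_\#(Y))$ is dense in $L^q(\Omega\times Y)$, the functional $\mu$ extends uniquely to a bounded linear functional on $L^q(\Omega\times Y)$; by the Riesz representation on this reflexive space there is a unique $u\in L^p(\Omega\times Y)$ with $\mu(\phi)=\int_{\Omega\times Y}u\,\phi\di{(x,y)}$ and $\|u\|_{L^p(\Omega\times Y)}\le C$. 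Unwinding the definitions, this $u$ is exactly the two-scale limit in the sense of \Cref{thm3.5.1}, i.e.\ $u_\e\overset{2}{\rightharpoonup}u$ along the subsequence.

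The step I expect to be the main obstacle is the oscillation lemma together with the measurability (``admissibility'') of $x\mapsto\phi(x,x/\e)$: handling a general $\phi\in L^q(\Omega;C_\#(Y))$ rather than a smooth tensor product requires a careful density argument, since one must simultaneously control the $\e$-dependent substitution and pass to the limit in both $\e$ and the approximation. Everything else --- Banach--Alaoglu, H\"older's inequality, and the Riesz representation on $L^q(\Omega\times Y)$ --- is routine once this averaging estimate is in hand.
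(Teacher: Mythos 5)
Your argument is correct and is essentially the standard functional-analytic proof of Nguetseng's compactness theorem (as presented by Allaire and Lukkassen--Nguetseng--Wall), which is precisely what the paper relies on: the lemma is stated without proof and referred to Theorem 1 of the cited work of Nguetseng and the subsequent literature. The one point deserving care --- the uniform bound on $\|\phi(\cdot,\cdot/\e)\|_{L^q(\Omega)}$ and the oscillation lemma for general $\phi\in L^{q}(\Omega;C_{\#}(Y))$ --- is exactly the one you flag, and it is handled by the pointwise estimate $|\phi(x,x/\e)|\le\sup_{y}|\phi(x,y)|$ together with the density argument you describe.
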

In the defintion \ref{thm3.5.1}, one can notice that the space of test functions is chosen as $L^{q}(\Omega;C_{\#}(Y))$, but we can replace the space of test functions by $C_{0}^{\infty}(\Omega; C_{\#}^{\infty}(Y))$, if $(u_{\varepsilon})_{\varepsilon >0}$ satisfies certain condition which is given in the following theorem:

\begin{lemma}
Let $(u_{\varepsilon})_{\varepsilon >0}$ be bounded in $L^{p}(\Omega)$ such that
\begin{equation}
\underset{\varepsilon \to 0}{\lim}\int_{\Omega}u_{\varepsilon}(x)\phi(x,\frac{x}{\varepsilon})\,dx=\int_{\Omega}\int_{Y}u(x,y)\phi (x,y)\,dx\,dy \textnormal{ for all }\phi \in C_{0}^{\infty}(\Omega;C_{\#}^{\infty}(Y)). \label{eqn3.5.2}
\end{equation}
Then $(u_{\varepsilon})_{\varepsilon >0}$ is two-scale convergent to $u$.
\end{lemma}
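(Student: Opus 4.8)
The plan is to combine the compactness result of \Cref{lem:5.2} with a density argument and the standard subsequence principle. The only real content of the statement is that the defining relation \eqref{eqn3.5.1}, a priori required for every $\phi\in L^{q}(\Omega;C_{\#}(Y))$, may in fact be verified on the much smaller class $C_{0}^{\infty}(\Omega;C_{\#}^{\infty}(Y))$ appearing in \eqref{eqn3.5.2}. The bridge between the two classes is the density of the smooth test functions in $L^{q}(\Omega\times Y)$.

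First I would apply \Cref{lem:5.2} to the bounded sequence $(u_{\varepsilon})$: there is a subsequence (not relabeled) and a limit $\tilde{u}\in L^{p}(\Omega\times Y)$ with $u_{\varepsilon}\overset{2}{\rightharpoonup}\tilde{u}$. By \Cref{thm3.5.1} this means
\[
\lim_{\varepsilon\to0}\int_{\Omega}u_{\varepsilon}(x)\phi\Bigl(x,\tfrac{x}{\varepsilon}\Bigr)\di{x}
=\int_{\Omega}\int_{Y}\tilde{u}(x,y)\phi(x,y)\di{y}\di{x}
\]
for every $\phi\in L^{q}(\Omega;C_{\#}(Y))$, and in particular for every $\phi\in C_{0}^{\infty}(\Omega;C_{\#}^{\infty}(Y))$. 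Comparing this with the hypothesis \eqref{eqn3.5.2}, which yields the same limit with $u$ in place of $\tilde{u}$, I obtain
\[
\int_{\Omega}\int_{Y}\bigl(\tilde{u}(x,y)-u(x,y)\bigr)\phi(x,y)\di{y}\di{x}=0
\]
for all $\phi\in C_{0}^{\infty}(\Omega;C_{\#}^{\infty}(Y))$. Since this class is dense in $L^{q}(\Omega\times Y)$, the duality of $L^{p}$ and $L^{q}$ forces $\tilde{u}=u$ almost everywhere, so the extracted subsequence in fact two-scale converges to the prescribed limit $u$.

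It remains to upgrade this subsequential statement to convergence of the whole sequence, which I would do by contradiction via the subsequence principle. If $(u_{\varepsilon})$ did not two-scale converge to $u$, there would be some $\phi_{0}\in L^{q}(\Omega;C_{\#}(Y))$, a number $\delta>0$, and a subsequence along which $\bigl|\int_{\Omega}u_{\varepsilon}\phi_{0}(\cdot,\cdot/\varepsilon)\di{x}-\int_{\Omega\times Y}u\phi_{0}\bigr|\geq\delta$. That subsequence is still bounded in $L^{p}(\Omega)$ and still satisfies \eqref{eqn3.5.2}, so the previous paragraph applies verbatim and produces a further subsequence two-scale converging to $u$; testing against $\phi_{0}$ then contradicts the choice of $\delta$. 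Hence every $\phi\in L^{q}(\Omega;C_{\#}(Y))$ satisfies \eqref{eqn3.5.1}, which is exactly $u_{\varepsilon}\overset{2}{\rightharpoonup}u$.

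The genuinely substantive step is the identification $\tilde{u}=u$, and the only nontrivial ingredient there is the density of $C_{0}^{\infty}(\Omega;C_{\#}^{\infty}(Y))$ in $L^{q}(\Omega\times Y)$. This is where the periodic structure in the $y$-variable enters: one approximates the $y$-dependence by finite Fourier series on the torus $Y$ and the $x$-dependence by ordinary mollification, so that tensor products of $C_{0}^{\infty}(\Omega)$ with trigonometric polynomials already exhaust a dense subspace. Everything else is the formal compactness-plus-uniqueness packaging, and no delicate estimate is needed, since the test functions are bounded and $(u_{\varepsilon})$ is uniformly bounded in $L^{p}$.
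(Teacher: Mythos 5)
Your argument is correct and is essentially the standard proof of this statement (the paper itself only cites \cite{Ngu89,LNW02,All92,DC99} for it): Nguetseng's compactness from \Cref{lem:5.2}, identification of the subsequential limit with $u$ via density of $C_{0}^{\infty}(\Omega;C_{\#}^{\infty}(Y))$ in $L^{q}(\Omega\times Y)$ and $L^{p}$--$L^{q}$ duality (valid since $1<q<\infty$ here), and the subsequence--subsubsequence principle to upgrade to the full sequence. No gaps.
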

We state some theorems on two-scale convergence. The proofs of all these theorems can be found in \cite{Ngu89}, \cite{LNW02}, \cite{All92}, \cite{DC99}.
\begin{lemma}\label{lem:5.4}
Let $(u_{\varepsilon})_{\varepsilon >0}$ be strongly convergent to $u \in L^{p}(\Omega)$, then $(u_{\varepsilon})_{\varepsilon >0}$ is two-scale convergent to $u_{1}(x,y)=u(x)$.
\end{lemma}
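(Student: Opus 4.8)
The plan is to verify the defining relation \eqref{eqn3.5.1} with the candidate limit $u_1(x,y)=u(x)$; note first that $u_1\in L^p(\Omega\times Y)$ with $\|u_1\|_{L^p(\Omega\times Y)}=\|u\|_{L^p(\Omega)}$ because $|Y|=1$. Since $u_\varepsilon\to u$ strongly in $L^p(\Omega)$, the sequence $(u_\varepsilon)$ is bounded in $L^p(\Omega)$, so by the preceding lemma---which reduces the admissible test functions to $C_0^\infty(\Omega;C_\#^\infty(Y))$, see \eqref{eqn3.5.2}---it suffices to prove
$$\int_\Omega u_\varepsilon(x)\,\phi\!\left(x,\tfrac{x}{\varepsilon}\right)\di{x}\longrightarrow\int_\Omega\int_Y u(x)\,\phi(x,y)\di{y}\di{x}$$
for all $\phi\in C_0^\infty(\Omega;C_\#^\infty(Y))$.

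First I would write $u_\varepsilon=(u_\varepsilon-u)+u$ and split the integral accordingly. The contribution of $u_\varepsilon-u$ is estimated by Hölder's inequality as $\|u_\varepsilon-u\|_{L^p(\Omega)}\,\|\phi(\cdot,\cdot/\varepsilon)\|_{L^q(\Omega)}$; since $\phi$ is bounded on $\Omega\times Y$ we have the uniform bound $\|\phi(\cdot,\cdot/\varepsilon)\|_{L^q(\Omega)}\le|\Omega|^{1/q}\|\phi\|_{L^\infty(\Omega\times Y)}$, and the strong convergence $\|u_\varepsilon-u\|_{L^p(\Omega)}\to0$ makes this contribution vanish.

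The remaining contribution $\int_\Omega u\,\phi(x,x/\varepsilon)\di{x}$ no longer carries $\varepsilon$ in its slow factor. Here I would invoke the classical fact that a periodically oscillating test function converges weakly to its cell average, $\phi(\cdot,\cdot/\varepsilon)\rightharpoonup\int_Y\phi(\cdot,y)\di{y}$ in $L^q(\Omega)$; pairing this against the fixed function $u\in L^p(\Omega)=\big(L^q(\Omega)\big)^{*}$ yields
$$\int_\Omega u(x)\,\phi\!\left(x,\tfrac{x}{\varepsilon}\right)\di{x}\longrightarrow\int_\Omega u(x)\left(\int_Y\phi(x,y)\di{y}\right)\di{x},$$
which is precisely the desired right-hand side. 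Adding the two contributions then completes the verification.

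I expect the only substantive ingredient to be the weak convergence of the oscillating test function used in the second term; the first term is immediate from Hölder and strong convergence. This averaging property is a standard (generalized Riemann--Lebesgue) fact: it is checked by testing $\phi(\cdot,\cdot/\varepsilon)$ against characteristic functions of cubes---partitioning $\Omega$ into $\varepsilon$-periodicity cells and replacing the fast average by $\int_Y\phi\di{y}$ via uniform continuity and periodicity---and is then extended to all test weights in $L^p(\Omega)$ by density together with the uniform $L^q$-bound above. It is documented in the references collected in this appendix, so the lemma carries no genuine obstacle beyond correctly assembling these pieces.
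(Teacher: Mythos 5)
Your proof is correct; the paper itself gives no argument for this lemma, deferring to the references (Nguetseng, Allaire, Lukkassen--Nguetseng--Wall, Cioranescu--Donato), and your decomposition $u_\varepsilon=(u_\varepsilon-u)+u$ combined with the H\"older estimate for the first piece and the weak $L^q$-convergence of the oscillating test function $\phi(\cdot,\cdot/\varepsilon)$ to its cell average for the second is exactly the standard argument found there. The one ingredient you rightly flag as substantive --- the generalized Riemann--Lebesgue averaging property --- is indeed the only nontrivial step, and your sketch of its proof (partition into $\varepsilon$-cells, uniform continuity, density) is adequate.
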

\begin{lemma}\label{lem:5.5}
Let $(u_{\varepsilon})_{\varepsilon >0}$ be two-scale convergent to $u$ in $L^{p}(\Omega \times Y)$, then $(u_{\varepsilon})_{\varepsilon >0}$ is weakly convergent to $\int_{Y}u(x,y)\,dy$ in $L^{p}(\Omega)$ and $(u_{\varepsilon})_{\varepsilon >0}$ is bounded.
\end{lemma}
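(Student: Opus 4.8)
The plan is to read off both assertions directly from the defining relation \eqref{eqn3.5.1} by feeding it test functions that are independent of the fast variable $y$. First I would fix an arbitrary $\psi\in L^q(\Omega)$ and regard it as the admissible two-scale test function $\phi(x,y):=\psi(x)$; this $\phi$ is continuous and (trivially) $Y$-periodic in $y$, so that $\phi\in L^q(\Omega;C_\#(Y))$ with $\|\phi\|_{L^q(\Omega;C_\#(Y))}=\|\psi\|_{L^q(\Omega)}$. Since $\phi(x,\tfrac{x}{\varepsilon})=\psi(x)$, the convergence \eqref{eqn3.5.1} collapses to
$$
\lim_{\varepsilon\to0}\int_\Omega u_\varepsilon(x)\psi(x)\di{x}
=\int_\Omega\int_Y u(x,y)\psi(x)\di{y}\di{x}
=\int_\Omega\Big(\int_Y u(x,y)\di{y}\Big)\psi(x)\di{x}.
$$
As $\psi$ ranges over all of $L^q(\Omega)=\big(L^p(\Omega)\big)^*$, and as $x\mapsto\int_Y u(x,y)\di{y}$ lies in $L^p(\Omega)$ (by Jensen's inequality, using $|Y|=1$), this is exactly the weak convergence $u_\varepsilon\rightharpoonup\int_Y u(\cdot,y)\di{y}$ in $L^p(\Omega)$.

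For the boundedness I would invoke the uniform boundedness principle. Each $u_\varepsilon$ acts as a bounded linear functional on $L^q(\Omega)$ via $\psi\mapsto\int_\Omega u_\varepsilon\psi\di{x}$, i.e.\ as an element of $L^q(\Omega)^*=L^p(\Omega)$. The display above shows that for every fixed $\psi\in L^q(\Omega)$ the scalar sequence $\big(\int_\Omega u_\varepsilon\psi\di{x}\big)_\varepsilon$ converges and is therefore bounded. Since the index set is a genuine sequence $\varepsilon\to0$, the family $(u_\varepsilon)$ is a pointwise-bounded family of functionals on the Banach space $L^q(\Omega)$, and Banach--Steinhaus gives
$$
\sup_\varepsilon\|u_\varepsilon\|_{L^p(\Omega)}=\sup_\varepsilon\|u_\varepsilon\|_{L^q(\Omega)^*}<\infty.
$$

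The argument is essentially routine, and no step constitutes a genuine obstacle. The only points requiring a little care are the admissibility of $y$-independent test functions — which rests on constants being $Y$-periodic together with the duality $L^p(\Omega)^*=L^q(\Omega)$ valid in the stated range $1<p,q<\infty$ — and the fact that $(u_\varepsilon)$ is indexed by a sequence, so that Banach--Steinhaus applies directly without any separability or reflexivity considerations.
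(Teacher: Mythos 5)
Your argument is correct and is precisely the standard proof of this fact: the paper itself does not prove the lemma but defers to the references (Nguetseng, Allaire, Lukkassen--Nguetseng--Wall), where exactly this reasoning appears --- test the two-scale limit relation with $y$-independent functions $\phi(x,y)=\psi(x)\in L^{q}(\Omega;C_{\#}(Y))$ to obtain weak convergence to $\int_{Y}u(\cdot,y)\di{y}$, and then apply Banach--Steinhaus to the pointwise-bounded family of functionals to get uniform boundedness in $L^{p}(\Omega)$. The two points you flag (admissibility of $y$-independent test functions and the duality $(L^{q}(\Omega))^{*}=L^{p}(\Omega)$ in the range $1<p,q<\infty$) are indeed the only delicate steps, and you handle both correctly.
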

\begin{lemma}\label{lem:5.6}
Let $\left(u_\varepsilon\right)_{\varepsilon >0}$ be a sequence in $H^{1,p}(\Omega)$ such that $u_{\varepsilon}\rightharpoonup u$ in $H^{1,p}(\Omega)$. Then $\left(u_\varepsilon\right)_{\varepsilon >0}$ two-scale converges to u and there exist a subsequence $\varepsilon$, still denoted by same symbol, and a $u_{1}\in L^{p}(\Omega; H^{1,p}_{\#}(Y))$ such that $\nabla _{x}u_{\varepsilon}\overset{2}{\rightharpoonup} \nabla u+\nabla _{y} u_{1}$.
\end{lemma}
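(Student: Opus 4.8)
The plan is to follow the classical argument of Nguetseng and Allaire. Since $u_\varepsilon \rightharpoonup u$ in $H^{1,p}(\Omega)$, the sequences $(u_\varepsilon)$ and $(\nabla u_\varepsilon)$ are bounded in $L^p(\Omega)$ and $L^p(\Omega)^n$, respectively. By the compactness result \Cref{lem:5.2}, I would first extract a subsequence (not relabeled) and a limit $\xi \in L^p(\Omega \times Y)^n$ with $\nabla u_\varepsilon \overset{2}{\rightharpoonup} \xi$. Simultaneously, weak convergence in $H^{1,p}(\Omega)$ together with the Rellich--Kondrachov theorem yields strong convergence $u_\varepsilon \to u$ in $L^p(\Omega)$, so \Cref{lem:5.4} identifies the two-scale limit of $(u_\varepsilon)$ itself as the $y$-independent function $u(x)$; this already gives the first assertion. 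It then remains to show that $\xi = \nabla u + \nabla_y u_1$ for a suitable $u_1$.

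To determine the structure of $\xi$, I would test against oscillating vector fields that are solenoidal in the fast variable. Fix $\Psi \in C_0^\infty(\Omega; C_\#^\infty(Y))^n$ with $\nabla_y \cdot \Psi = 0$ and evaluate $\int_\Omega \nabla u_\varepsilon(x) \cdot \Psi(x, x/\varepsilon)\,dx$. Integrating by parts in $x$ and using the chain rule $\nabla \cdot [\Psi(x,x/\varepsilon)] = (\nabla_x \cdot \Psi)(x,x/\varepsilon) + \varepsilon^{-1}(\nabla_y \cdot \Psi)(x,x/\varepsilon)$, the singular $\varepsilon^{-1}$ term drops out precisely because $\Psi$ is $y$-divergence-free. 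Passing to the two-scale limit on both sides --- on the left using $\nabla u_\varepsilon \overset{2}{\rightharpoonup} \xi$ and on the right using $u_\varepsilon \overset{2}{\rightharpoonup} u(x)$ --- and then integrating by parts once more in $x$, I obtain
\begin{equation*}
\int_{\Omega \times Y} (\xi - \nabla u) \cdot \Psi \,dx\,dy = 0
\end{equation*}
for every such $\Psi$.

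The concluding step, and the main technical point, is the orthogonality characterization: the annihilator (in the $L^p$--$L^q$ duality) of the space of $Y$-periodic, $y$-divergence-free fields consists exactly of the $y$-gradients of periodic functions. I would establish this via a Helmholtz/de Rham decomposition on the torus, applied for almost every $x \in \Omega$, producing a scalar field $u_1(x,\cdot) \in H^{1,p}_\#(Y)$, unique up to an additive constant, with $\xi - \nabla u = \nabla_y u_1$. Selecting the zero-mean representative and verifying joint measurability in $(x,y)$ then gives $u_1 \in L^p(\Omega; H^{1,p}_\#(Y))$, which completes the proof. I expect the delicate part to be exactly this de Rham argument together with the measurable selection ensuring that $u_1$ depends on $x$ in an $L^p$ fashion; the integration-by-parts manipulations and the limit passages are routine once the compactness input and the divergence-free test-function class are in place.
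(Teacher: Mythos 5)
Your argument is correct and is precisely the classical Nguetseng--Allaire proof (compactness of $\nabla u_\varepsilon$, testing against $y$-solenoidal oscillating fields so the $\varepsilon^{-1}$ term vanishes, then the de Rham/Helmholtz characterization of the annihilator of periodic divergence-free fields as $y$-gradients). The paper does not prove this lemma itself but defers to \cite{Ngu89}, \cite{All92}, \cite{LNW02}, \cite{DC99}, and your proposal reproduces exactly the argument found in those references, so there is nothing further to compare.
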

\begin{lemma}\label{lem:5.7}
Let $(u_\varepsilon)_{\varepsilon >0}$ and $(\varepsilon \nabla _{x}u_{\varepsilon})_{\varepsilon >0}$ be bounded in $L^{p}(\Omega)$ and $[L^{p}(\Omega)]^{n}$ respectively. Then there exists $u\in L^{p}(\Omega;H^{1,p}_{per}(Y))$ such that up to a subsequence, still denoted by $\varepsilon$, we have
\[u_{\varepsilon}\overset{2}{\rightharpoonup} u \]
\[and \quad \varepsilon \nabla _{x}u_{\varepsilon}\overset{2}{\rightharpoonup} \nabla _{y}u\]
$as\quad \varepsilon \to 0$.
\end{lemma}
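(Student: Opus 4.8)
The plan is to read the four convergences off the uniform bounds of \Cref{3:lem_estimates}, feeding the two relevant sequences into the abstract compactness results of the appendix. Since $U_\e\in H_0^1(\Omega)^3$ already lives on all of $\Omega$ the extension is trivial ($\widehat{U_\e}=U_\e$), and \cref{eq:3.5} bounds $U_\e$ in $L^2(S;H_0^1(\Omega))^3$, the zero extension $\widehat{p_\e}$ in $L^2(S\times\Omega)$, and $\e\widehat{\nabla p_\e}$ in $L^2(S\times\Omega)^3$, all uniformly in $\e$. Throughout, the slow pair $(t,x)$ is carried as a parameter while the two-scale calculus acts in the fast variable $y=x/\e$; concretely, every extraction is performed on the base domain $S\times\Omega$ with oscillating test functions from $C_0^\infty(S\times\Omega;C_\#^\infty(Y))$.

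For $(i)$ and $(ii)$ I would first extract, by weak compactness of $U_\e$ in the reflexive space $L^2(S;H_0^1(\Omega))^3$, a (non-relabelled) subsequence with $U_\e\rightharpoonup U$; as $H_0^1$ is weakly closed, $U\in L^2(S;H_0^1(\Omega))^3$. \Cref{lem:5.6} then delivers simultaneously that $U_\e\overset{2}{\rightharpoonup}U$ (the function's own two-scale limit carries no $y$-dependence) and the corrector structure $\nabla U_\e\overset{2}{\rightharpoonup}\nabla U+\nabla_y u^1$ with $u^1\in L^2(S\times\Omega;H^1_{per}(Y))^3$. Since symmetrisation is a bounded linear operation, passing to the two-scale limit in $\int_{S\times\Omega}\widehat{e(U_\e)}\colon\phi(t,x,\tfrac{x}{\e})\di{x}\di{t}$ and using $\tfrac12(\nabla_y u^1+(\nabla_y u^1)^T)=e_y(u^1)$ yields $\widehat{e(U_\e)}\overset{2}{\rightharpoonup}e(U)+e_y(u^1)$, which is $(ii)$. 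The plain arrow in $(i)$ is then read as the accompanying strong $L^2(S\times\Omega)$ convergence: Rellich compactness $H_0^1(\Omega)\hookrightarrow\hookrightarrow L^2(\Omega)$ together with the uniform time regularity of $U_\e$ inherited via \cref{eq:u} from a uniform bound on $\partial_t p_\e$ (available by the remark preceding \Cref{3:lem_estimates}) gives strong $L^2$ convergence through an Aubin--Lions argument. Note that a nontrivial corrector $u^1\neq0$ forbids convergence of the gradients, so $(i)$ cannot be upgraded to strong $H^1$.

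For $(iii)$ and $(iv)$ I would apply \Cref{lem:5.7} to the honest pair $(\widehat{p_\e},\e\widehat{\nabla p_\e})$, both bounded in $L^2$ by \cref{eq:3.5}. This produces a further subsequence and a limit $p$ with $\widehat{p_\e}\overset{2}{\rightharpoonup}p$ and $\e\nabla p_\e\overset{2}{\rightharpoonup}\nabla_y p$, i.e.\ exactly $(iii)$ and $(iv)$; passing to a common diagonal subsequence reconciles this with the displacement extraction. Because each $\widehat{p_\e}$ is extended by zero off the perforation $\Omega_\e^g$, the limit $p$ is supported in $\Omega\times Y^g$, and since $\overline{Y^g}\subset Y$ does not meet $\partial Y$ the periodicity is vacuous there, so $p$ may equivalently be regarded as an element of $L^2(S\times\Omega;H^1(Y^g))$ as in the statement.

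The main obstacle is the gradient of the zero extension: across the oscillating interface $\Gamma_\e$ the distributional gradient of $\widehat{p_\e}$ carries a singular surface term, so $\nabla\widehat{p_\e}\neq\widehat{\nabla p_\e}$ and \Cref{lem:5.7} cannot be invoked for $\widehat{p_\e}$ verbatim. The fix is to run the argument with the genuine pair $(\widehat{p_\e},\e\widehat{\nabla p_\e})$ on the perforated geometry, exploiting that the gel cells are interior ($\overline{Y^g}\subset Y$) and mutually disconnected and that the no-flux condition $-\e^2K\nabla p_\e\cdot n_\e=0$ on $\Gamma_\e$ neutralises the surface contribution in the limiting identity; this is precisely the perforated-domain form of \Cref{lem:5.7}. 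The only remaining subtlety, the joint handling of time, is dispatched once and for all by performing each extraction on $S\times\Omega$ and using \Cref{lem:5.2} to secure the raw two-scale limits before identifying the gradient and corrector structure.
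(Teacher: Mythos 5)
You have proved the wrong statement. \Cref{lem:5.7} is an abstract compactness result about an \emph{arbitrary} sequence $(u_\varepsilon)$ in $L^p(\Omega)$ whose scaled gradients $(\varepsilon\nabla_x u_\varepsilon)$ are bounded in $L^p(\Omega)^n$: its hypotheses contain no time variable, no displacement field, no perforated geometry, no zero extension, and no PDE. What you have written is instead a proof of \Cref{lem:hom_limit}, i.e.\ the \emph{application} of the two-scale compactness machinery to the particular solutions $(U_\varepsilon,p_\varepsilon)$ of the $\varepsilon$-problem via the a priori bounds of \Cref{3:lem_estimates}. Worse, your treatment of the pressure explicitly says to ``apply \Cref{lem:5.7} to the pair $(\widehat{p_\varepsilon},\varepsilon\widehat{\nabla p_\varepsilon})$'' --- you invoke verbatim the very lemma under consideration, so as a proof of the statement your argument is circular and contains nothing addressing the two points that actually require proof: that a two-scale limit $u$ exists and lies in the smaller space $L^p(\Omega;H^{1,p}_{per}(Y))$ rather than merely $L^p(\Omega\times Y)$, and that the two-scale limit of $\varepsilon\nabla_x u_\varepsilon$ is identified as $\nabla_y u$. (For comparison: the paper does not reprove this lemma either; it quotes it from the literature, cf.\ the references to Nguetseng, Allaire, Lukkassen--Nguetseng--Wall and Cioranescu--Donato preceding \Cref{lem:5.4}.)

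A correct argument runs as follows, and uses only \Cref{lem:5.2} plus integration by parts. By \Cref{lem:5.2} applied to each of the two bounded sequences, extract a common subsequence with $u_\varepsilon\overset{2}{\rightharpoonup}u_0\in L^p(\Omega\times Y)$ and $\varepsilon\nabla_x u_\varepsilon\overset{2}{\rightharpoonup}\xi\in L^p(\Omega\times Y)^n$. For $\phi\in C_0^\infty(\Omega;C_{\#}^\infty(Y))^n$, integration by parts gives
\begin{align*}
\int_\Omega \varepsilon\nabla_x u_\varepsilon(x)\cdot\phi\Bigl(x,\tfrac{x}{\varepsilon}\Bigr)\di{x}
=-\int_\Omega u_\varepsilon(x)\Bigl(\varepsilon(\dive_x\phi)\Bigl(x,\tfrac{x}{\varepsilon}\Bigr)+(\dive_y\phi)\Bigl(x,\tfrac{x}{\varepsilon}\Bigr)\Bigr)\di{x};
\end{align*}
passing to the limit, the term carrying $\varepsilon\,\dive_x\phi$ vanishes by the $L^p$-bound on $u_\varepsilon$, and the two two-scale convergences yield
\begin{align*}
\int_{\Omega\times Y}\xi\cdot\phi\di{(x,y)}=-\int_{\Omega\times Y}u_0\,\dive_y\phi\di{(x,y)}.
\end{align*}
Choosing $\phi$ compactly supported in $y$ shows $\xi=\nabla_y u_0$ in the distributional sense, whence $u_0(x,\cdot)\in H^{1,p}(Y)$ for a.e.\ $x$; then allowing general $Y$-periodic $\phi$ and integrating by parts in $y$ forces the boundary contributions on opposite faces of $\partial Y$ to cancel, i.e.\ $u_0$ has matching traces on opposite faces, so $u_0\in L^p(\Omega;H^{1,p}_{per}(Y))$, and one sets $u:=u_0$. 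Everything in your proposal about zero extensions across $\Gamma_\varepsilon$, the no-flux condition, Aubin--Lions and the elasticity corrector is material for \Cref{lem:hom_limit} and has no place here.
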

Since in this work we will only consider evolution equations which introduces time as an additional parameter, we generalize the definition \ref{thm3.5.1} to the functions depending on $t$ and $x$.
\begin{definition}
A sequence of functions $(u_{\varepsilon})_{\varepsilon >0}$ in $L^{p}((0,T)\times \Omega)$ is said to two-scale convergent to a limit $u\in L^{p}((0,T)\times\Omega \times Y)$ if 
\begin{equation}
\underset{\varepsilon \to 0}{lim}\int_{0}^{T}\int_{\Omega}u_{\varepsilon}(t,x)\phi (t,x,\frac{x}{\varepsilon})\di{x}\di{t}=\int_{0}^{T}\int_{\Omega}\int_{Y}u(t,x,y)\phi (t,x,y)\di{(x,y)}\di{t}\label{eqn3.5.3}
\end{equation}
\textnormal{ for all }$\phi \in L^{q}((0,T)\times \Omega; C_{per}(Y))$.
\end{definition}
All the above theorems on two-scale convergence can be generalized for the functions depending on $t$ and $x$.

\end{document}